\newtheorem{thm}{Theorem}[section]
\newtheorem{lem}[thm]{Lemma}
\newtheorem{cor}[thm]{Corollary}
\newcommand\numberthis{\addtocounter{equation}{1}\tag{\theequation}}
\title{Alternating odd pretzel knots and chirally cosmetic surgeries}
\author{Konstantinos Varvarezos}
\begin{document}
\maketitle

\begin{abstract}
A pair of surgeries on a knot is chirally cosmetic if they result in homeomorphic manifolds with opposite orientations.  Using recent methods of Ichihara, Ito, and Saito, we show that, except for the (2,5) and (2,7)-torus knots, the genus 2 and 3 alternating odd pretzel knots do not admit any chirally cosmetic surgeries.  Further, we show that for a fixed genus, at most finitely many alternating odd pretzel knots admit chirally cosmetic surgeries.
\end{abstract}


\section{Introduction}

Given a knot $K$ in $S^3$ and an $r\in \mathbf{Q}\cup \{\infty\},$ we denote the \textit{Dehn surgery} on $K$ with slope $r$ by $S^3_r(K).$  Surgeries on $K$ along distinct slopes $r$ and $r'$ are called \textit{cosmetic} if $S^3_r(K)$ and $S^3_{r'}(K)$ are homeomorphic manifolds.  Furthermore, a pair of such surgeries is said to be \textit{purely cosmetic} if $S^3_r(K)$ and $S^3_{r'}(K)$ are homeomorphic as \textit{oriented} manifolds.  We use the symbol $\cong$ to denote ``orientation-preserving homeomorphic." If, on the other hand, $S^3_r(K) \cong -S^3_{r'}(K),$ we say this pair of surgeries is \textit{chirally cosmetic}; here $-M$ denotes the manifold $M$ with reversed orientation.

No purely cosmetic surgeries are have been found on nontrivial knots in $S^3$; indeed, Problem 1.81(A) in \cite{KirbyList} conjectures that none exist.  On the other hand, there are examples of chirally cosmetic surgeries.  For instance, $S^3_r(K) \cong -S^3_{-r}(K)$ whenever $K$ is an amphicheiral knot.  Also, $(2,n)$-torus knots are known to admit chirally cosmetic surgeries; see \cite{IIS,Mat}.

In this work, we consider the family of \textit{alternating odd pretzel knots}.  These are pretzel knots of the form $P(2k_1+1,2k_2+1,\dots,2k_{2g+1}+1)$, where the $k_i$ are integers all with the same sign; hence, up to mirror image, we may assume all $k_i$ are nonnegative.  See Figure \ref{fig:Pknot} for the five-stranded case.  It is a fact that $g$ is the Seifert genus of the knot.  For convenience, we will often use the shorthand $K(k_1,k_2,\dots,k_{2g+1})$ to denote these knots.

\begin{figure}
\centering
\includegraphics[width=.75\textwidth]{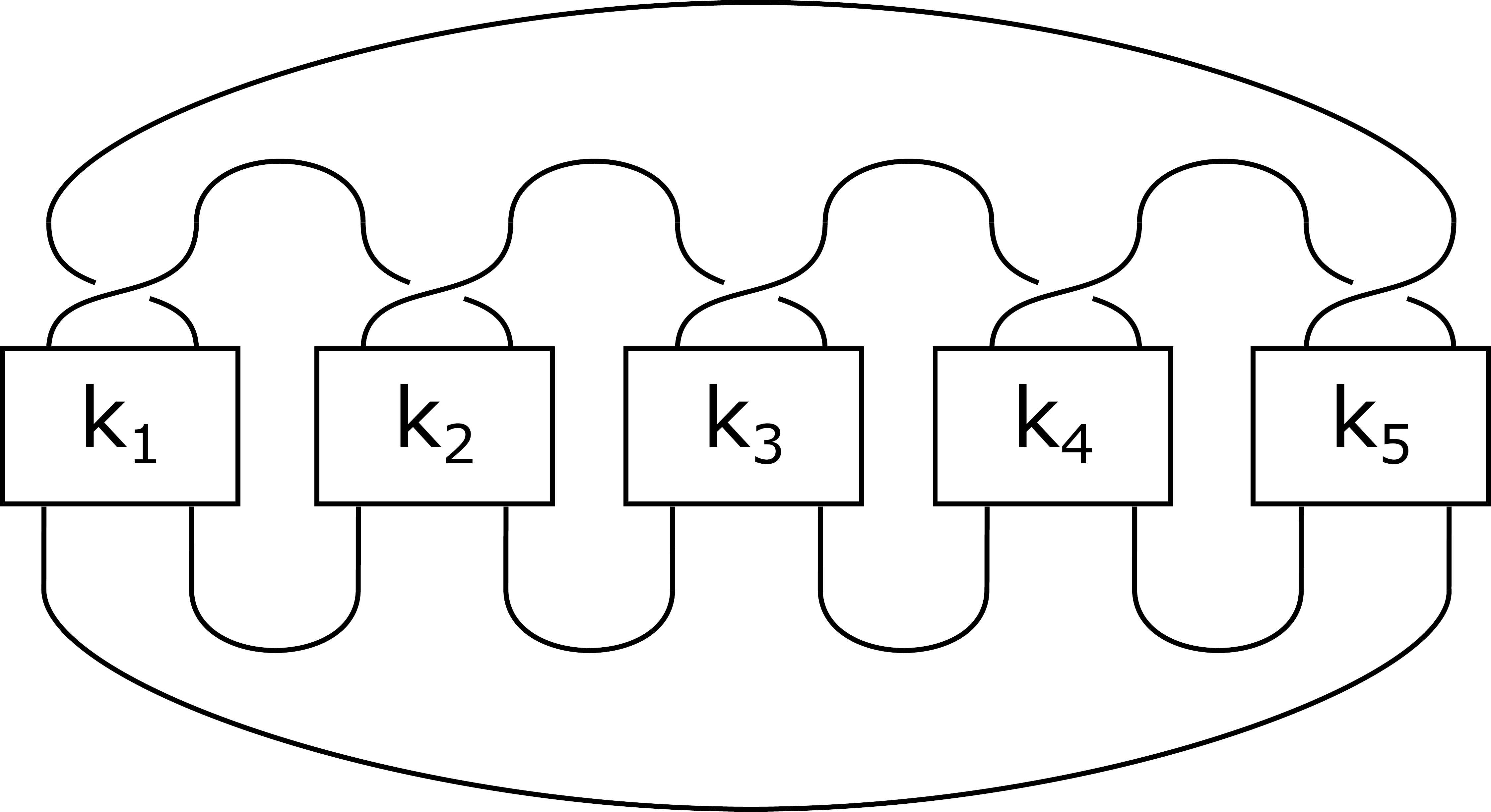}
\caption{The pretzel knot $P(2k_1+1,2k_2+1,2k_3+1,2k_4+1,2k_5+1).$  Here the boxes represent $k_i$ full right-handed twists of the strands passing through.}
\label{fig:Pknot}
\end{figure}

Note that these knots are already known not to admit any purely cosmetic surgeries either.  This can be seen, for instance, from the fact that of the signatures and $v_3$ of these knots are nonzero(see section \ref{sec:comp} below).  By \cite{Hans}, alternating knots that admit purely cosmetic surgeries must have zero signature.  Alternatively, by \cite{IW}, knots admitting purely cosmetic surgeries must have $v_3=0.$

Our first main result is that almost none of the alternating odd pretzel knots of genus 2 and 3 admit chirally cosmetic surgeries.

\begin{restatable}{thm}{main}\label{thm:main}
\begin{enumerate}[label=(\roman*)]
\item Let $K=K(k_1,k_2,k_3,k_4,k_5)$ with each $k_i \geq 0.$  If at least one $k_i>0,$ then $K$ does not admit any chirally cosmetic surgeries.
\item Let $K=K(k_1,k_2,k_3,k_4,k_5,k_6,k_7)$ with each $k_i \geq 0.$  If at least one $k_i>0,$ then $K$ does not admit any chirally cosmetic surgeries.
\end{enumerate}
\end{restatable}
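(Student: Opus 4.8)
The plan is to apply the obstruction to chirally cosmetic surgery developed by Ichihara, Ito, and Saito \cite{IIS}, which combines the surgery formula for the Casson--Walker invariant $\lambda$ with the behaviour of a degree-three finite-type invariant under orientation reversal. Suppose $S^3_{p/q}(K) \cong -S^3_{p/q'}(K)$ is a chirally cosmetic pair. Since an orientation-reversing homeomorphism preserves $|H_1|$, we may take $p > 0$ common to both slopes, with $p/q \neq p/q'$. First I would record the homological constraints: because $\lambda(-M) = -\lambda(M)$ and $|H_1(S^3_{p/q}(K))| = p$, the pair is constrained by number-theoretic conditions on $(p,q,q')$ coming from the linking form, together with the equation obtained from the Casson--Walker surgery formula
\[
\lambda\big(S^3_{p/q}(K)\big) = \frac{q}{p}\cdot\frac{\Delta_K''(1)}{2} + \lambda\big(L(p,q)\big),
\]
in which the lens-space terms contribute Dedekind sums. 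Combined with the analogous equation for a second invariant --- the degree-three Vassiliev invariant $v_3$, extracted from the low-order behaviour of the Jones polynomial --- the existence of a chirally cosmetic surgery reduces to the simultaneous solvability of a pair of equations in the knot invariants $a_2(K) = \tfrac12\Delta_K''(1)$ and $v_3(K)$, subject to the slope conditions.

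The second step is the explicit computation of these invariants for the families $K(k_1,\dots,k_5)$ and $K(k_1,\dots,k_7)$. In section \ref{sec:comp} I would compute the Conway polynomial and hence $a_2$, the Jones polynomial and hence $v_3$, and (to confirm chirality) the signature $\sigma$, obtaining each as an explicit symmetric polynomial in the $k_i$. The key qualitative output is that for $k_i \ge 0$ with at least one $k_i > 0$ these invariants have definite sign and satisfy strict inequalities; the all-zero specialization $K(0,\dots,0) = P(1,\dots,1) = T(2,2g+1)$ is precisely the boundary case at which the relevant inequalities degenerate to equalities.

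Finally I would substitute these polynomial expressions into the Ichihara--Ito--Saito system and show it has no solution once some $k_i > 0$. Because $(2,2g+1)$-torus knots do admit chirally cosmetic surgeries, the point $\big(a_2(K), v_3(K)\big)$ lying on the solution locus of that system forces $K$ to share the torus knot's invariant values; the strict inequalities from the previous step then show this can only happen when every $k_i = 0$, yielding exactly the stated exceptions $T(2,5)$ and $T(2,7)$.

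I expect the main obstacle to be twofold. First, computing $v_3$ for the seven-strand pretzel knots is substantially more involved than the five-strand case, both in obtaining the Jones polynomial and in extracting the invariant, and the resulting polynomial in $k_1,\dots,k_7$ is of higher degree. Second, the obstruction equations are not purely in the knot invariants: they also involve the unknown slopes through Dedekind sums, so the Diophantine analysis requires bounding $q$ and $q'$ and then ruling out each admissible residue class modulo $p$. Keeping the genus-$3$ case tractable --- by exploiting the symmetry in the $k_i$ and the positivity forced by $k_i \ge 0$ --- is where the real work lies.
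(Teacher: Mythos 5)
Your starting point---combining the Casson--Walker/Casson--Gordon constraint of Ichihara--Ito--Saito with a degree-three finite-type invariant---is indeed the route the paper takes, but the heart of your argument fails. The obstruction is not a ``solution locus'' in the $(a_2,v_3)$-plane that a knot either lies on or off of. After eliminating $q+q'$, the constraint (Corollary \ref{cor:strong}) reads
\[
\frac{\sigma(K,p)}{p} = \frac{-8a_2(K)v_3(K)}{7a_2(K)^2 - a_2(K) - 10a_4(K)},
\]
which involves $a_4(K)$ and, crucially, the $p$-signature $\sigma(K,p)$, an invariant depending on the unknown surgery parameter $p$; neither appears in the system you describe. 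A knot whose invariants differ from those of $T(2,2g+1)$ can still satisfy this equation for some $p$, so nothing ``forces $K$ to share the torus knot's invariant values.'' Concretely, $K(1,0,0,0,0,0,0)$ has right-hand side equal to $4$ and $\sigma(K,5)/5=4$, so it passes this test at $p=5$ even though all of its invariants differ from those of $T(2,7)$; the paper rules it out only by a separate integrality argument, namely $q+q'=-\sigma(K,5)/(4a_2(K))=-5/9\notin\mathbf{Z}$ via Theorem \ref{thm:cass}. The clean dichotomy you propose---strict inequality except when every $k_i=0$---does not exist, and the step ``the strict inequalities then show this can only happen when every $k_i=0$'' has no proof behind it.

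What the argument actually requires, and what your proposal is missing, is quantitative control of $\sigma(K,p)/p$. Since $0<\sigma(K,p)\leq 2gp$, the displayed equation yields an inequality (the paper's \eqref{eq:ineq} and \eqref{eq:ineq3}) which, combined with an estimate of the form $a_4(K)\geq\tfrac13 a_2(K)$, disposes of all but finitely many knots in each family. The survivors (four knots in genus 2, eleven in genus 3) must then be treated individually: one computes the rational number on the right-hand side, deduces that $p$ is divisible by its denominator, and bounds $\sigma(K,p)/p$ by locating the roots of the Alexander polynomial on the unit circle and counting signature jumps---with $K(1,0,0,0,0,0,0)$ additionally requiring the integrality argument above. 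Your plan of ``bounding $q$ and $q'$ and ruling out residue classes via Dedekind sums'' points in a different and less tractable direction: the IIS theorem already packages the Dedekind-sum data into $\sigma(K,p)$, and it is the signature function, not the linking form, that carries the proof.
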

\begin{proof}[Remark.]\let\qed\relax
$K(0,0,0,0,0)$ is the (-2,5)-torus knot, and $K(0,0,0,0,0,0,0)$ is the (-2,7)-torus knot. Both of these are already known to admit (infinitely many pairs of) chirally cosmetic surgeries; see \cite{Mat} and \cite{IIS}.
\end{proof}
Ichihara, Ito, and Saito have already shown the analogue of this for the genus 1 (three-stranded) case.  Indeed, they classified chirally cosmetic surgeries for all alternating genus 1 knots \cite{IIS}.

For the case of higher genus, we show more generally that, given a fixed genus, all but finitely many alternating odd pretzel knots admit no chirally cosmetic surgeries.  More precisely, we have the following:

\begin{restatable}{thm}{gen}\label{thm:gen}
Let $K=K(k_1,\dots,k_{2g+1})$.  If $k_1+\dots+k_{2g+1} \geq \alpha g$ then $K$ does not admit any chirally cosmetic surgeries.  Here $\alpha = \frac{9+\sqrt{237}}{12} \approx 2.0329$.
\end{restatable}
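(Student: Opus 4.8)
The plan is to reuse the Ichihara--Ito--Saito obstruction that underlies Theorem \ref{thm:main}, but to track its dependence on the genus $g$ and on $e_1 := k_1 + \dots + k_{2g+1}$. Their method reduces the existence of a chirally cosmetic pair $S^3_{p/q}(K) \cong -S^3_{p/q'}(K)$ (with $q \ne q'$) to the simultaneous solvability of finitely many relations among the surgery coefficients $(p,q,q')$ and the low-degree knot invariants $\sigma(K)$, $v_2(K)$, and $v_3(K)$ --- the signature, the Casson invariant $v_2 = a_2$ (the $z^2$-coefficient of the Conway polynomial), and the degree-$3$ Vassiliev invariant. These relations arise from the surgery formulas for the Casson--Walker invariant and its degree-$3$ analogue, together with the constraint $|H_1| = |p|$ and the orientation-reversal sign $\lambda(-M) = -\lambda(M)$. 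Eliminating the lens-space (Dedekind-sum) contributions across a finite case analysis on the shape of the admissible slopes leaves a small number of algebraic constraints purely among $\sigma(K)$, $v_2(K)$, and $v_3(K)$; the genus $2$ and $3$ arguments amount to checking that these constraints have no solutions, and our task is to establish the same for all sufficiently twisted knots in the family.

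The first step I would carry out is to record the closed forms of the three invariants for $K = K(k_1, \dots, k_{2g+1})$ obtained in section \ref{sec:comp}, expressed through the elementary symmetric functions $e_1 = \sum_i k_i$ and $e_2 = \sum_{i<j} k_i k_j$ of the twist parameters. I expect the signature to be bounded, equal to $\pm 2g$, while $v_2(K)$ is a positive quadratic and $v_3(K)$ a cubic in the $k_i$; in particular, as $e_1$ grows with $g$ fixed, $v_2$ and $v_3$ grow like $e_1^2$ and $e_1^3$ respectively. The important structural point is that the obstruction is, up to the bounded signature term, \emph{homogeneous}: it constrains a dimensionless combination of $v_2(K)$ and $v_3(K)$ (the ratio that must match a lens-space quantity lying in a controlled range) rather than the invariants individually.

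Substituting the closed forms into this constraint turns the IIS obstruction into a polynomial inequality in the variables $k_1, \dots, k_{2g+1}$. To reduce it to the single-variable statement of the theorem I would bound the higher symmetric functions $e_2, e_3, \dots$ in terms of $e_1$ and the number of strands $2g+1$ using Maclaurin's inequalities, thereby replacing the multivariate obstruction by an inequality in $e_1$ and $g$ alone. The extremal configuration is homogeneous of degree two in $(e_1, g)$, and the resulting condition is the quadratic $12\,e_1^2 - 18\,g\,e_1 - 13\,g^2 > 0$, whose positive root is exactly $e_1 = \alpha g$ with $\alpha = \tfrac{9+\sqrt{237}}{12}$; hence $e_1 \ge \alpha g$ forces the obstruction to fail, ruling out chirally cosmetic surgeries.

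I expect the main obstacle to be two-fold. First, the genus $2$ and $3$ computations are finite, whereas here I need \emph{uniform} closed forms for $v_2(K)$ and, more delicately, $v_3(K)$ valid for all $g$; obtaining these requires a recursive or skein-theoretic computation that exploits the pretzel structure and organizes the answer cleanly through $e_1$ and $e_2$. Second, and this is the genuinely sharp point, the passage from the multivariate inequality to the $e_1$-only bound must identify the \emph{worst-case} distribution of the $k_i$: I must verify that bounding $e_2$ (and any lower-order symmetric functions that appear) by their extreme values in terms of $e_1$ really does capture the configuration hardest to obstruct, uniformly in $g$. It is precisely this optimization that pins down the constant $\alpha$, and any slack there would weaken it.
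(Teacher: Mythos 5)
Your proposal follows essentially the same route as the paper: the combined Ito/IIS obstruction (Corollary \ref{cor:strong}), specialized to negative knots so that $0 < \sigma(K,p)/p \leq 2g$ yields the criterion that $4|v_3(K)| \geq 7g\,a_2(K)$ rules out chirally cosmetic surgeries (Lemma \ref{lem:weak}), is fed the closed forms of $a_2$ and $v_3$ in the elementary symmetric functions obtained by skein induction (Lemma \ref{lem:av}), and the resulting inequality reduces to exactly your quadratic $12e_1^2 - 18g e_1 - 13g^2 \geq 0$, whose positive root gives $\alpha = \frac{9+\sqrt{237}}{12}$. One small correction: no Maclaurin-type bounds are needed, and in fact upper bounds on $e_2, e_3$ point the wrong way --- these functions enter $4|v_3(K)| - 7g\,a_2(K)$ with nonnegative coefficients once $2e_1 \geq 3g$, so the hardest-to-obstruct configuration is simply $e_2 = e_3 = 0$ (all twisting concentrated in one strand), and the trivial lower bound $e_2, e_3 \geq 0$ completes the reduction.
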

\begin{proof}[Note:]\let\qed\relax
The crossing number of $K(k_1,\dots,k_{2g+1})$ is $2(k_1+\dots+k_{2g+1})+2g+1$ and so one may equivalently rephrase the theorem as saying that such knots admit no chirally cosmetic surgeries whenever the crossing number is at least $2(\alpha+1)g+1$.
\end{proof}

\section{Finite Type Invariants}
Here we briefly recall some facts about finite type invariants (also called Vassiliev invariants) for knots.  Suppose a knot invariant $v$ can be extended to an invariant of \textit{singular knots} (i.e., knots with possibly finitely many points of self-intersection) in a way that satisfies the following:
\[
v(K_{\bullet}) = v(K_+)-v(K_-)
\]
whenever the (singular) knots $K_+$, $K_-$, and $K_{\bullet}$ differ locally near a crossing/self-intersection as in Figure \ref{fig:skein}.  Then $v$ is said to be a finite type invariant of order $n$ if moreover $v(K)=0$ whenever $K$ has at least $n+1$ self-intersection points.

\begin{figure}
\centering
\includegraphics[width=.75\textwidth]{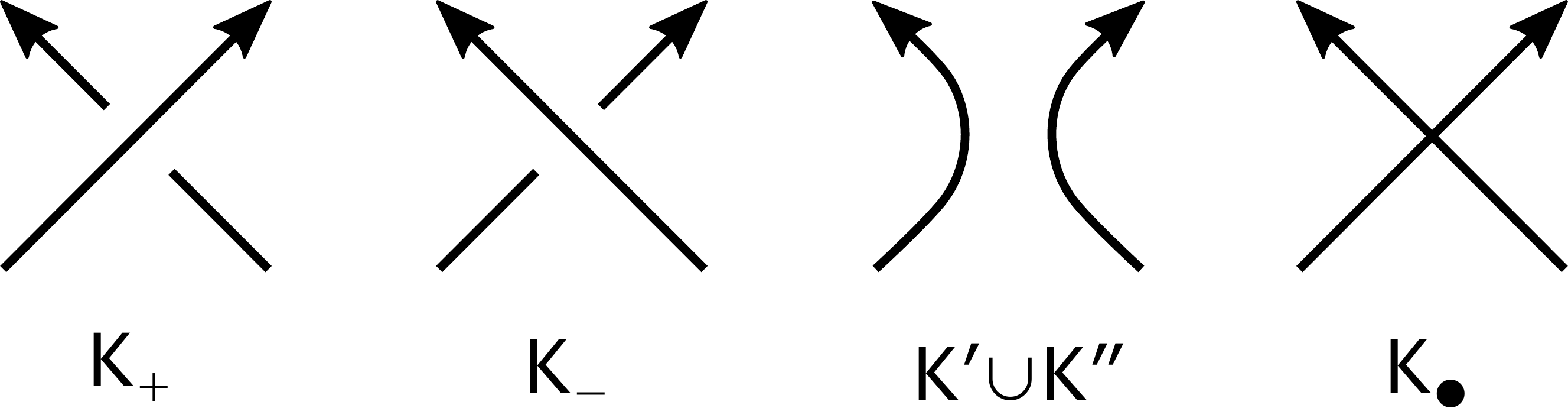}
\caption{The diagrams of $K_+$, $K_-$, $K'\cup K''$, and $K_{\bullet}$ differ only locally near a crossing.}
\label{fig:skein}
\end{figure}

For example, recall the Conway polynomial of a knot, which is related to the Alexander polynomial in the following way:
\[
\nabla_K(z) = \Delta_K(t)|_{z=t^{1/2}-t^{-1/2}}
\]
For a knot $K$, the Conway polynomial will have the form:
\[
\nabla_K(z) = 1 + \sum_{j=1}^{n}a_{2j}(K)z^{2j}
\]
It is a fact that the coefficent $a_{2j}(K)$ is a finite type invariant of order $2j$ for each $j$.

The other finite type invariant that will be of interest to us is $v_3$.  This is a third-order invariant, which may be defined as:
\[
v_3(K) = -\frac{1}{36}V'''_{K}(1) -\frac{1}{12}V''_{K}(1)
\]
where $V_K(t)$ is the Jones polynomial of $K$.
\subsection{Computing the Invariants for Pretzel Knots}\label{sec:comp}
We consider the case $K=K(k_1,k_2,k_3,k_4,k_5)$ first.  Applying Seifert's algorithm to a diagram such as the one in Figure \ref{fig:Pknot}, one finds that the Seifert form for $K$ may be given by the following matrix:
\[
A=\left(\begin{matrix}
k_1+k_2+1 & k_2 & 0 & 0 \\
k_2+1 & k_2+k_3+1 & k_3 & 0 \\
0 & k_3+1 & k_3+k_4+1 & k_4 \\
0 & 0 & k_4+1 & k_4+k_5+1
\end{matrix}\right)
\]
From this, we may compute the Alexander and Conway polynomials.  In particular, $\Delta_K(t)=\det\left(t^{1/2}A-t^{-1/2}A^T\right)$ and $\nabla_K(z) = \Delta_K(t)|_{z=t^{1/2}-t^{-1/2}} = 1+ a_2(K)z^2+a_4(K)z^4$.  Explicit computation gives:
\begin{equation}\label{eq:a242}
\begin{cases}
a_2(K) = 3 + 2 s_{1,5} + s_{2,5}\\
a_4(K) = 1 + s_{1,5} + s_{2,5} + s_{3,5} + s_{4,5}
\end{cases}
\end{equation}
Here we use $s_{n,m}$ as a shorthand for $s_n(k_1,\dots,k_m),$ the $n$th elementary symmetric polynomial in $k_1,\dots,k_m$ given by:
\[
s_{n,m}=\sum_{\substack{P\subseteq\{1,\dots,m\} \\ |P|=n}} \prod_{j\in P} k_j
\]
Let us here record some properties of the elementary symmetric polynomials, which are straightforward consequences of their definition.
\begin{lem}\label{lem:sym}
Let $s_{n,m}$ denote the $n$th elementary symmetric polynomial in $k_1,\dots,k_m$.  Then, the following hold:
\begin{itemize}
\item If $k_j=0$ for all $j>N$, then whenever $n\geq N$, $s_{n,m} = s_{n,N}$
\item $s_{n,m+1} = s_{n,m} + k_{m+1}s_{n-1,m}$
\end{itemize}
\end{lem}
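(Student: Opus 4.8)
The plan is to prove both statements directly from the defining formula
\[
s_{n,m} = \sum_{\substack{P \subseteq \{1,\dots,m\} \\ |P|=n}} \prod_{j \in P} k_j,
\]
by partitioning the index subsets $P$ according to which variables they involve. Neither statement requires anything beyond bookkeeping of the terms in this sum, so I do not anticipate a genuine obstacle; the only point demanding a little care is tracking the degenerate ranges of $n$ in the first item.

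For the first bullet, I would observe that if $k_j = 0$ for every $j > N$, then any subset $P$ containing an index $j > N$ contributes a product $\prod_{i \in P} k_i$ that includes the factor $k_j = 0$ and hence vanishes. Thus the only surviving terms come from subsets $P \subseteq \{1, \dots, N\}$, and this restricted sum is by definition $s_{n,N}$. The identity in fact holds for every $n$: when $n > N$ both sides are empty sums equal to $0$, and when $n = N$ both reduce to the single product $k_1 \cdots k_N$. In particular it holds whenever $n \geq N$, as claimed.

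For the second bullet, I would split the size-$n$ subsets $P \subseteq \{1, \dots, m+1\}$ into those that omit the index $m+1$ and those that contain it. The former are exactly the size-$n$ subsets of $\{1, \dots, m\}$, and they sum to $s_{n,m}$. Each subset of the latter type has the form $P' \cup \{m+1\}$ with $P' \subseteq \{1, \dots, m\}$ of size $n-1$, and contributes $k_{m+1} \prod_{j \in P'} k_j$; summing over all such $P'$ gives $k_{m+1}\, s_{n-1,m}$. Adding the two contributions yields $s_{n,m+1} = s_{n,m} + k_{m+1}\, s_{n-1,m}$, the familiar Pascal-type recursion for elementary symmetric polynomials. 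Since both identities reduce to elementary partitions of the indexing set, there is no hard step to isolate.
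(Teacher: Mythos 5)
Your proof is correct and is exactly the argument the paper has in mind: the paper dismisses both identities as ``straightforward consequences of the definition'' (its proof environment only records the conventions $s_{0,m}=1$ and $s_{n,m}=0$ for $n>m$), and your partition of the index subsets is the direct verification of that claim. Your additional observation that the first identity actually holds for all $n$, not just $n\geq N$, is also correct.
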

\begin{proof}[Remark.]
By convention, we take $s_{0,m}=1$, and if $n>m$, $s_{n.m}=0$.
\end{proof}

Now we turn to the case $K=K(k_1,k_2,k_3,k_4,k_5,k_6,k_7)$.  As before, we can represtent the Seifert form with the following matrix:
\[
A=\left(\begin{matrix}
k_1+k_2+1 & k_2 & 0 & 0 & 0 & 0 \\
k_2+1 & k_2+k_3+1 & k_3 & 0 & 0 & 0 \\
0 & k_3+1 & k_3+k_4+1 & k_4 & 0 & 0 \\
0 & 0 & k_4+1 & k_4+k_5+1 & k_5 & 0 \\
0 & 0 & 0 & k_5+1 & k_5+k_6+1 & k_6 \\
0 & 0 & 0 & 0 & k_6+1 & k_6+k_7+1
\end{matrix}\right)
\]
Once again, computation of the Conway polynomial gives:
\begin{equation}\label{eq:a243}
\begin{cases}
a_2(K) = 6 + 3 s_{1,7} + s_{2,7}\\
a_4(K) = 5 + 4s_{1,5} + 3s_{2,5} + 2s_{3,5} + s_{4,5}\\
a_6(K) = 1 + s_{1,7} + s_{2,7} + s_{3,7} + s_{4,7} + s_{5,7} + s_{6,7}
\end{cases}
\end{equation}
In the general case where $K=K(k_1,\dots,k_{2g+1})$, we will need the following formulae for $a_2(K)$ and $v_3(K)$.
\begin{lem}\label{lem:av}
Let $K=K(k_1,\dots,k_{2g+1})$.  Then
\begin{align*}
a_2(K)&=\frac{1}{2}g(g+1) + g s_{1,2g+1} + s_{2,2g+1}
\\
v_3(K)&=-\frac{1}{2}\left(\frac{g(g+1)(2g+1)}{3} + g(2g+1)s_{1,2g+1} + g s_{1,2g+1}^2 + 2g s_{2,2g+1} + s_{1,2g+1}s_{2,2g+1} + s_{3,2g+1} \right)
\end{align*}
\end{lem}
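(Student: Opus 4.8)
The plan is to handle the two invariants by separate routes, since $a_2(K)$ is already within reach of the Seifert/Conway data set up above, whereas $v_3(K)$ is defined through the Jones polynomial and needs a genuinely different computation. For $a_2$ I would exploit the tridiagonal structure of the Seifert matrix. Write $A_m$ for the leading principal $m\times m$ submatrix of $A$, so that $A_{2g}=A$ computes $K(k_1,\dots,k_{2g+1})$ and, crucially, $A_m$ depends only on $k_1,\dots,k_{m+1}$ and agrees across genera. Set $z=t^{1/2}-t^{-1/2}$ and $D_m=\det\!\left(t^{1/2}A_m-t^{-1/2}A_m^T\right)$. A short computation of the $(m,m)$ entry and of the product of the two off-diagonal entries in row/column $m$ (using $t^{1/2}t^{-1/2}=1$ and $t^{1/2}-t^{-1/2}=z$) gives, by cofactor expansion along the last row, the three-term recurrence
\[
D_m=d_m\,D_{m-1}-c_m\,D_{m-2},\qquad d_m=(k_m+k_{m+1}+1)\,z,\qquad c_m=k_m(k_m+1)\,z^2-1,
\]
with $D_0=1$ and $D_1=(k_1+k_2+1)z$, and $\nabla_K(z)=D_{2g}$.

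Since $d_m$ is odd and $c_m$ is even in $z$, each $D_m$ has the parity of $m$; writing $[z^k]$ for the coefficient of $z^k$, the constant term telescopes correctly ($[z^0]D_{2g}=1$), the odd minors satisfy $[z^1]D_{2j-1}=(k_{2j-1}+k_{2j}+1)+[z^1]D_{2j-3}$, which solves to $[z^1]D_{2j-1}=s_{1,2j}+j$, and then $a_2(K)=[z^2]D_{2g}$ telescopes to
\[
a_2(K)=\sum_{j=1}^{g}\Big[(k_{2j}+k_{2j+1}+1)\,(s_{1,2j}+j)-k_{2j}(k_{2j}+1)\Big].
\]
Regrouping the constant, linear, and quadratic parts and folding the partial sums via the second identity of Lemma \ref{lem:sym} collapses this to $\tfrac12 g(g+1)+g\,s_{1,2g+1}+s_{2,2g+1}$; agreement with \eqref{eq:a242} and \eqref{eq:a243} at $g=2,3$ is a useful check.

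For $v_3$ I would compute the Jones polynomial $V_K(t)$ directly via the Kauffman bracket. In the Temperley–Lieb algebra on two strands, each of the $2g+1$ twist regions resolves to a combination $\alpha_i\,\mathbf{1}+\beta_i\,e$ of the identity and the cup-cap $e$, with $\alpha_i,\beta_i$ explicit in the twist number $k_i$ (geometric sums from unwinding $2k_i+1$ crossings). Assembling the pretzel closure of these $2g+1$ tangles yields a closed form for $V_K(t)$ in the $k_i$, after which $v_3(K)=-\tfrac1{36}V_K'''(1)-\tfrac1{12}V_K''(1)$ is read off from the order-$3$ term of the expansion at $t=1$, using $V_K(1)=1$ and $V_K'(1)=0$.

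The hard part is precisely this last extraction: resumming the third-order Taylor data of a closure of $2g+1$ twist contributions into the single symmetric expression of the lemma, where the mixed monomials $g\,s_{1,2g+1}^2$, $s_{1,2g+1}s_{2,2g+1}$, and $s_{3,2g+1}$ all appear. I would manage it by substituting $t=e^h$, controlling each tangle factor to $O(h^3)$, and repeatedly applying the second identity of Lemma \ref{lem:sym} to fold the emerging partial symmetric sums into $s_{1,2g+1},s_{2,2g+1},s_{3,2g+1}$, with the genus $1$, $2$, $3$ knots furnishing low-order consistency checks. As a structural guard-rail, finite-type theory guarantees that $v_3$, restricted to this twist family, is a \emph{symmetric} polynomial in $k_1,\dots,k_{2g+1}$ (pretzel knots being invariant under permutation of their parameters) of total degree at most $3$; this forces the target form up to finitely many scalar coefficients, which the explicit expansion then pins down.
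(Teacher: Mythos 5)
Your derivation of the $a_2$ formula is correct and genuinely different from the paper's. The paper never touches the Seifert matrix for general $g$: it proves the $a_2$ formula by induction on $g$, unwinding one crossing at a time with the skein relation \eqref{eq:skein2} and computing linking numbers of the resolved links. You instead extract $a_2$ from the Conway polynomial via the three-term recurrence for the tridiagonal determinant $D_m=\det\left(t^{1/2}A_m-t^{-1/2}A_m^T\right)$, and your data check out: the diagonal entries give $d_m=(k_m+k_{m+1}+1)z$, the product of the two off-diagonal entries in row/column $m$ is $k_m(k_m+1)(t+t^{-1})-k_m^2-(k_m+1)^2=k_m(k_m+1)z^2-1$, the parity argument and the values $[z^1]D_{2j-1}=s_{1,2j}+j$ are right, and the telescoped sum $\sum_{j=1}^{g}\left[(k_{2j}+k_{2j+1}+1)(s_{1,2j}+j)-k_{2j}(k_{2j}+1)\right]$ does collapse to $\tfrac12 g(g+1)+gs_{1,2g+1}+s_{2,2g+1}$ (cleanest finish: induct on $g$ and check, using Lemma \ref{lem:sym}, that the $j=g$ summand equals the difference of the claimed formulas for $g$ and $g-1$). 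This buys a purely linear-algebraic proof of the first identity, at the price of not producing the intermediate skein data that the paper later reuses.

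The $v_3$ half, however, has a genuine gap: it is a strategy, not a proof, and the part you defer is the entire content of the statement. A closed form for the Kauffman bracket of a pretzel knot does exist (a state sum over which twist regions are resolved vertically versus horizontally, with powers of $\delta=-A^2-A^{-2}$ counting loops), but your proposal never carries out the writhe correction, the third-order expansion in $h$, or the resummation, so nothing in it certifies the specific coefficients $g(2g+1)$, $g$, $2g$, $1$, $1$ asserted by the lemma; and this route is exactly where convention errors (writhe sign, $t=A^{\pm 4}$, mirror image, loop-counting in the closure) creep in. Your structural guard-rail also cannot substitute for the computation, for two reasons. First, its justification is wrong as stated: odd pretzel knots on five or more strands are generally \emph{not} invariant under arbitrary permutations of their parameters -- a transposition of non-adjacent strands is a mutation, not an isotopy -- so symmetry of $v_3$ in the $k_i$ must instead be deduced from mutation invariance of the Jones polynomial (the degree-at-most-$3$ claim, by contrast, is correctly argued, since decrementing $k_i$ is a crossing change). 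Second, even granted, it only confines $v_3$ to the span of $1,s_1,s_1^2,s_1^3,s_2,s_1s_2,s_3$, so pinning it down requires seven independent evaluations \emph{for each genus}, with coefficients that are unknown functions of $g$; producing those evaluations is a computation of the same order as the one being avoided. Contrast this with the paper, which couples the two invariants: having established the $a_2$ formula, it runs a second induction using the skein relation \eqref{eq:skein3}, in which every term on the right-hand side is supplied by the already-proved $a_2$ formula and Lemma \ref{lem:sym}. That coupling is what makes the $v_3$ computation finite and checkable, and it is the ingredient your decoupled Jones-polynomial route would have to rebuild from scratch.
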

\begin{proof}
We employ the following skein relation for $a_2$ \cite{IW}:
\begin{equation}\label{eq:skein2}
a_2(K_+)-a_2(K_-)=\mathrm{lk}(K',K'')
\end{equation}
Here $K_+$ and $K_-$ differ at a single crossing, and $K'$ and $K''$ are the two components of the link that results from the oriented resolution at that crossing; see Figure \ref{fig:skein} for an illustration.

We proceed by induction on $g$.  Notice that $g=0$ corresponds to the unknot, which has $a_2=0$.
Now consider $g \geq 1$
Considering one of the crossings on the $2g+1$st strand, we have $K_{-}=K(k_1,\dots,k_{2g+1})$, $K_{+}=K(k_1,\dots,k_{2g+1}-1),$ and $K' \cup K''$ is a pair of unknots with linking number $-(k_1+\dots+k_{2g}+g) = -(s_{1,2g}+g)$.  Applying (\ref{eq:skein2}) gives:
\[
a_2(K(k_1,\dots,k_{2g+1}))-a_2(K(k_1,\dots,k_{2g+1}-1)) = s_{1,2g}+g
\]
Repeating this procedure $k_{2g+1}-1$  times gives:
\[
a_2(K(k_1,\dots,k_{2g+1}))-a_2(K(k_1,\dots,k_{2g},0)) = k_{2g+1}(s_{1,2g}+g)
\]
Now repeating this procedure for the $2g$th strand gives:
\begin{multline}\label{eq:ind2}
a_2(K(k_1,\dots,k_{2g+1}))-a_2(K(k_1,\dots,k_{2g-1},0,0)) \\= k_{2g}(s_{1,2g-1}+g) + k_{2g+1}(s_{1,2g}+g)
\end{multline}
Now, applying (\ref{eq:skein2}) to one of the two rightmost crossings of $K(k_1\dots,k_{2g-1},0,0)$, we see that:
\[
a_2(K(k_1,\dots,k_{2g-1},0,0)) - a_2(K(k_1,\dots,k_{2g-1})) = (s_{1,2g-1}+g)
\]
Combining this with (\ref{eq:ind2}) and applying the induction hypothesis, we find:
\begin{align*}
a_2(K&(k_1,\dots,k_{2g+1})) = a_2(K(k_1,\dots,k_{2g-1}))  + (k_{2g}+1)(s_{1,2g-1}+g) + k_{2g+1}(s_{1,2g}+g) \\
&= \frac{1}{2}g(g-1) + (g-1) s_{1,2g-1} + s_{2,2g-1} + (k_{2g}+1)(s_{1,2g-1}+g) + k_{2g+1}(s_{1,2g}+g)\\
&= \frac{1}{2}g(g-1) + g + (g-1) s_{1,2g-1} + s_{1,2g-1} + gk_{2g}+ gk_{2g+1} +  s_{2,2g-1} + k_{2g}s_{1,2g-1} + k_{2g+1}s_{1,2g}\\
&=\frac{1}{2}g(g+1) + gs_{1,2g+1} + s_{2,2g+1}
\end{align*}
where Lemma \ref{lem:sym} was used in the last step.

Let us now compute $v_3(K).$  We shall make use of the fact that $v_3$ satisfies the following skein relation \cite{IW,Lescop}:
\begin{equation}\label{eq:skein3}
v_3(K_+) - v_3(K_-) = \frac{1}{2}\Big(a_2(K_+)+a_2(K_-)+\mathrm{lk}^2(K',K'')\Big)-a_2(K')-a_2(K'')
\end{equation}
Again we proceed by induction, noting that for $g=0$, $v_3$ of the unknot is zero.  Suppose $g \geq 1$.  As before, we use (\ref{eq:skein3}) on a crossing in the last strand and see:
\begin{multline}\nonumber
v_3(K(k_1,\dots,k_{2g+1})) - v_3(K(k_1,\dots,k_{2g+1}-1)) = -\frac{1}{2}\Big(a_2(K(k_1,\dots,k_{2g+1})) \\ + a_2(K(k_1,\dots,k_{2g+1}-1))+(s_{1,2g}+g)^2\Big)
\end{multline}
Repeatedly applying (\ref{eq:skein3}) to the rest of the crossings in the last strand, one finds that:
\begin{multline}\nonumber
v_3(K(k_1,\dots,k_{2g+1})) - v_3(K(k_1,\dots,k_{2g},0)) = \\-\frac{1}{2}\Big(a_2(K(k_1,\dots,k_{2g+1})) + a_2(K(k_1,\dots,k_{2g},0)) \\+ 2\sum_{j=1}^{k_{2g+1}-1}a_2(K(k_1,\dots,k_{2g},j))+k_{2g+1}(s_{1,2g}+g)^2\Big)
\end{multline}
Applying the formula we have obtained for $a_2$ together with Lemma \ref{lem:sym} gives:
\begin{multline}\nonumber
v_3(K(k_1,\dots,k_{2g+1})) - v_3(K(k_1,\dots,k_{2g},0)) = \\-\frac{1}{2}\bigg(\frac{1}{2}g(g+1) + g(s_{1,2g}+k_{2g+1}) + s_{2,2g}+k_{2g+1}s_{1,2g} + \frac{1}{2}g(g+1) + gs_{1,2g} + s_{2,2g} \\+ 2\sum_{j=1}^{k_{2g+1}-1}\left(\frac{1}{2}g(g+1) + g(s_{1,2g}+j) + s_{2,2g}+js_{1,2g}\right)+k_{2g+1}(s_{1,2g}+g)^2\bigg) \\
=-\frac{1}{2}\Big(g(g+1) + 2gs_{1,2g}+ gk_{2g+1} + 2s_{2,2g}+k_{2g+1}s_{1,2g}\\+ (k_{2g+1}-1)g(g+1) + 2g(k_{2g+1}-1)s_{1,2g}+2(k_{2g+1}-1)s_{2,2g} \\+gk_{2g+1}(k_{2g+1}-1) + k_{2g+1}(k_{2g+1}-1)s_{1,2g}+k_{2g+1}(s_{1,2g}+g)^2\Big)\\
=-\frac{1}{2}\Big(k_{2g+1}g(g+1) + 2gk_{2g+1}s_{1,2g}+ gk_{2g+1}^2  + 2k_{2g+1}s_{2,2g} \\+k_{2g+1}^2s_{1,2g} + k_{2g+1}\left(s_{1,2g}^2+2gs_{1,2g}+g^2\right)\Big)\\
=-\frac{1}{2}k_{2g+1}\Big(g(2g+1) + 4gs_{1,2g}+ gk_{2g+1} + 2s_{2,2g} +k_{2g+1}s_{1,2g} + s_{1,2g}^2\Big)
\end{multline}
Similarly, applying \eqref{eq:skein3} repeatedly to the $2g$th strand now gives:
\begin{multline}\label{eq:ind3}
v_3(K(k_1,\dots,k_{2g+1})) - v_3(K(k_1,\dots,k_{2g-1},0,0)) = \\
-\frac{1}{2}k_{2g}\Big(g(2g+1) + 4gs_{1,2g-1}+ gk_{2g} + 2s_{2,2g-1} +k_{2g}s_{1,2g-1} + s_{1,2g-1}^2\Big)\\
-\frac{1}{2}k_{2g+1}\Big(g(2g+1) + 4gs_{1,2g}+ gk_{2g+1} + 2s_{2,2g} +k_{2g+1}s_{1,2g} + s_{1,2g}^2\Big)
\end{multline}
Now, applying \eqref{eq:skein3} to one of the two rightmost crossings of $K(k_1,\dots,k_{2g-1},0,0)$, one sees that:
\begin{multline}\nonumber
v_3(K(k_1,\dots,k_{2g-1},0,0)) - v_3(K(k_1,\dots,k_{2g-1})) = \\
-\frac{1}{2}\left(a_2(K(k_1,\dots,k_{2g-1},0,0))+a_2(K(k_1,\dots,k_{2g-1})+(s_{1,2g-1}+g)^2\right) \\
=-\frac{1}{2}\bigg(\frac{g(g+1)}{2} + gs_{1,2g-1} + s_{2,2g-1}+\frac{g(g-1)}{2} + (g-1)s_{1,2g-1} \\+ s_{2,2g-1}+s_{1,2g-1}^2 + 2gs_{1,2g-1}+g^2\bigg)\\
=-\frac{1}{2}\left(2g^2 + (4g-1)s_{1,2g-1} + 2s_{2,2g-1} + s_{1,2g}^2\right)\\
\end{multline}
Combining this with \eqref{eq:ind3} and applying the induction hypothesis gives:
\begin{multline}\nonumber
v_3(K(k_1,\dots,k_{2g+1})) =  v_3(K(k_1,\dots,k_{2g-1})) -\frac{1}{2}\left(2g^2 + (4g-1)s_{1,2g-1} + 2s_{2,2g-1} + s_{1,2g-1}^2\right) \\
-\frac{1}{2}k_{2g}\left(g(2g+1) + 4gs_{1,2g-1}+ gk_{2g} + 2s_{2,2g-1} +k_{2g}s_{1,2g-1} + s_{1,2g-1}^2\right)\\
-\frac{1}{2}k_{2g+1}\left(g(2g+1) + 4gs_{1,2g}+ gk_{2g+1} + 2s_{2,2g} +k_{2g+1}s_{1,2g} + s_{1,2g}^2\right)\\
=-\frac{1}{2}\bigg(\frac{g(g-1)(2g-1)}{3} + (g-1)(2g-1)s_{1,2g-1} + (g-1) s_{1,2g-1}^2 + 2(g-1) s_{2,2g-1} + s_{1,2g-1}s_{2,2g-1} + s_{3,2g-1}\\+2g^2 + (4g-1)s_{1,2g-1} + 2s_{2,2g-1} + s_{1,2g-1}^2\\+ k_{2g}\left(g(2g+1) + 4gs_{1,2g-1}+ gk_{2g} + 2s_{2,2g-1} +k_{2g}s_{1,2g-1} + s_{1,2g-1}^2\right) \\+k_{2g+1}\left(g(2g+1) + 4gs_{1,2g}+ gk_{2g+1} + 2s_{2,2g} +k_{2g+1}s_{1,2g} + s_{1,2g}^2\right)\bigg)\\
=-\frac{1}{2}\bigg(\frac{g(g+1)(2g+1)}{3} + g(2g+1)s_{1,2g-1} + g s_{1,2g-1}^2 + 2g s_{2,2g-1} + s_{1,2g-1}s_{2,2g-1} + s_{3,2g-1}\\
+ g(2g+1)k_{2g} + 4gk_{2g}s_{1,2g-1}+ gk_{2g}^2+ 2k_{2g}s_{2,2g-1} +k_{2g}^2s_{1,2g-1} + k_{2g}s_{1,2g-1}^2 \\
+g(2g+1)k_{2g+1} + 4gk_{2g+1}s_{1,2g}+ gk_{2g+1}^2 + 2k_{2g+1}s_{2,2g} +k_{2g+1}^2s_{1,2g} + k_{2g+1}s_{1,2g}^2\bigg)\\
=-\frac{1}{2}\bigg(\frac{g(g+1)(2g+1)}{3} + g(2g+1)(s_{1,2g-1}+ k_{2g} + k_{2g+1}) \\+ g \left(s_{1,2g-1}^2 + 2k_{2g}s_{1,2g-1} +k_{2g}^2 + 2k_{2g+1}s_{1,2g} +  k_{2g+1}^2\right)\\ + 2g( s_{2,2g-1} + k_{2g}s_{1,2g-1} + k_{2g+1}s_{1,2g}) + s_{3,2g-1}
 + k_{2g}s_{2,2g-1} + k_{2g+1}s_{2,2g} \\ 
+ s_{1,2g-1}s_{2,2g-1}+k_{2g}s_{1,2g-1}(s_{1,2g-1} + k_{2g}) + k_{2g}s_{2,2g-1}  + k_{2g+1}s_{2,2g} +k_{2g+1}s_{1,2g}(s_{1,2g} + k_{2g+1})\bigg)
\end{multline}
We now repeatedly apply Lemma \ref{lem:sym} to find:
\begin{multline}\nonumber
v_3(K(k_1,\dots,k_{2g+1})) = -\frac{1}{2}\bigg(\frac{g(g+1)(2g+1)}{3} + g(2g+1)s_{1,2g+1} \\+ g \left((s_{1,2g-1}+k_{2g})^2 + 2k_{2g+1}s_{1,2g} +  k_{2g+1}^2\right) + 2g( s_{2,2g}+ k_{2g+1}s_{1,2g}) + s_{3,2g}+ k_{2g+1}s_{2,2g} \\
+ s_{1,2g-1}s_{2,2g-1}+ k_{2g}s_{2,2g-1}+k_{2g}s_{1,2g-1}s_{1,2g}   + k_{2g+1}s_{2,2g} +k_{2g+1}s_{1,2g}s_{1,2g+1}\bigg)\\
= -\frac{1}{2}\bigg(\frac{g(g+1)(2g+1)}{3} + g(2g+1)s_{1,2g+1} + g \left(s_{1,2g}^2 + 2k_{2g+1}s_{1,2g} +  k_{2g+1}^2\right) + 2gs_{2,2g+1} + s_{3,2g+1} \\ 
+ s_{2,2g-1}(s_{1,2g-1}+ k_{2g})+k_{2g}s_{1,2g-1}s_{1,2g}   + k_{2g+1}s_{2,2g} +k_{2g+1}s_{1,2g}s_{1,2g+1}\bigg)\\
= -\frac{1}{2}\bigg(\frac{g(g+1)(2g+1)}{3} + g(2g+1)s_{1,2g+1} + g (s_{1,2g} + k_{2g+1})^2 + 2gs_{2,2g+1} + s_{3,2g+1} \\ 
+ s_{2,2g-1}s_{1,2g}+k_{2g}s_{1,2g-1}s_{1,2g}   + k_{2g+1}s_{2,2g} +k_{2g+1}s_{1,2g}s_{1,2g+1}\bigg)\\
= -\frac{1}{2}\bigg(\frac{g(g+1)(2g+1)}{3} + g(2g+1)s_{1,2g+1} + gs_{1,2g+1}^2 + 2gs_{2,2g+1} + s_{3,2g+1} \\ 
+ s_{1,2g}(s_{2,2g-1}+k_{2g}s_{1,2g-1})   + k_{2g+1}s_{2,2g} +k_{2g+1}s_{1,2g}s_{1,2g+1}\bigg)\\
= -\frac{1}{2}\bigg(\frac{g(g+1)(2g+1)}{3} + g(2g+1)s_{1,2g+1} + gs_{1,2g+1}^2 + 2gs_{2,2g+1} + s_{3,2g+1} \\ 
+ s_{1,2g}s_{2,2g} + k_{2g+1}s_{2,2g} +k_{2g+1}s_{1,2g}s_{1,2g+1}\bigg)\\
= -\frac{1}{2}\bigg(\frac{g(g+1)(2g+1)}{3} + g(2g+1)s_{1,2g+1} + gs_{1,2g+1}^2 + 2gs_{2,2g+1} + s_{3,2g+1} \\ 
+ s_{2,2g}(s_{1,2g} + k_{2g+1}) +k_{2g+1}s_{1,2g}s_{1,2g+1}\bigg)\\
= -\frac{1}{2}\bigg(\frac{g(g+1)(2g+1)}{3} + g(2g+1)s_{1,2g+1} + gs_{1,2g+1}^2 + 2gs_{2,2g+1} + s_{3,2g+1} \\ 
+ s_{2,2g}s_{1,2g+1} +k_{2g+1}s_{1,2g}s_{1,2g+1}\bigg)\\
= -\frac{1}{2}\bigg(\frac{g(g+1)(2g+1)}{3} + g(2g+1)s_{1,2g+1} + gs_{1,2g+1}^2 + 2gs_{2,2g+1} + s_{3,2g+1} \\ 
+ s_{1,2g+1}(s_{2,2g} +k_{2g+1}s_{1,2g})\bigg)\\
= -\frac{1}{2}\left(\frac{g(g+1)(2g+1)}{3} + g(2g+1)s_{1,2g+1} + gs_{1,2g+1}^2 + 2gs_{2,2g+1} + s_{3,2g+1}
+ s_{1,2g+1}s_{2,2g+1}\right)
\end{multline}
as desired.
\end{proof}

\subsection{Cosmetic Surgery Constraints}
We now review some of the main constraints on the existence of chirally cosmetic surgeries involving finite type invariants that are known.
Studying the degree 2 part of the LMO invariant, Ito has obtained the following:
\begin{thm}[Corollary 1.3 of \cite{Ito}]\label{thm:ft}
Let $K$ be a knot and suppose $S^3_{p/q}(K) \cong \pm S^3_{p/q'}(K).$  If $v_3(K) \neq 0,$ then
\[
\frac{p}{q+q'} = \frac{7a_2(K)^2 - a_2(K)-10a_4(K)}{2v_3(K)}
\]
\end{thm}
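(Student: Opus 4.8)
The plan is to extract the identity from the degree-$\leq 2$ part of the LMO invariant $\tau^{\mathrm{LMO}}$, which is a homeomorphism invariant of rational homology spheres valued in a graded space of Jacobi diagrams. Write $\tau^{\mathrm{LMO}} = 1 + \tau_1 + \tau_2 + \cdots$ for its homogeneous pieces; the degree-$1$ piece $\tau_1$ recovers the Casson--Walker invariant, and the degree-$2$ piece $\tau_2$ is the object of interest. The single structural fact I would lean on is the behaviour under orientation reversal, $\tau_n(-M) = (-1)^n \tau_n(M)$, so that $\tau_2$ is orientation-\emph{even}. Consequently, whichever sign occurs in the hypothesis $S^3_{p/q}(K) \cong \pm S^3_{p/q'}(K)$, applying $\tau_2$ to both sides yields the single equation
\[
\tau_2\big(S^3_{p/q}(K)\big) = \tau_2\big(S^3_{p/q'}(K)\big),
\]
since the possible sign is killed by $(-1)^2 = 1$. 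This is exactly why both cases of the $\pm$ can be handled at once.

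Next I would invoke a rational-surgery formula for $\tau_2$ (via the Aarhus integral, or Lescop's equivariant surgery formula) to write, for any knot $K$,
\[
\tau_2\big(S^3_{p/q}(K)\big) = \frac{q}{p}\,V(K) + \Big(\frac{q}{p}\Big)^2 W(K) + c_2\big(L(p,q)\big),
\]
where $c_2(L(p,q))$ depends only on the lens space $L(p,q)$ underlying the surgery and $V(K), W(K)$ are built from finite type invariants of $K$ of order $\leq 4$. The decisive computational step is to identify the relevant Jacobi-diagram coefficients with the classical invariants, which should give $V(K) = 2v_3(K)$ and $W(K) = -\big(7a_2(K)^2 - a_2(K) - 10a_4(K)\big)$ up to a common nonzero normalisation. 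A consistency check comes for free from orientation-evenness: since $v_3$ is odd under mirroring while $a_2, a_4$ are even, and reversing orientation sends $q/p \mapsto -q/p$ together with $K$ to its mirror, the invariant $v_3$ must multiply the odd power $q/p$ and the even invariants the even power $(q/p)^2$, exactly as displayed.

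To finish I would cancel the lens-space term. The contribution $c_2(L(p,q))$ is precisely $\tau_2$ of the lens space itself (the case $K = $ unknot), hence an orientation-even invariant of $L(p,q)$. Comparing Reidemeister torsions of $S^3_{p/q}(K)$ and $S^3_{p/q'}(K)$ — which factor as a lens-space torsion times a common Alexander-polynomial term, as the knot is the same — forces $L(p,q) \cong \pm L(p,q')$, and orientation-evenness then gives $c_2(L(p,q)) = c_2(L(p,q'))$. Substituting the surgery formula into $\tau_2(S^3_{p/q}(K)) = \tau_2(S^3_{p/q'}(K))$ and cancelling this common term leaves
\[
\frac{q-q'}{p}\,V(K) + \frac{q^2 - q'^2}{p^2}\,W(K) = 0.
\]
Factoring out $\tfrac{q-q'}{p}$ (nonzero, since the slopes are distinct) yields $V(K) + \tfrac{q+q'}{p}\,W(K) = 0$, and using $v_3(K) \neq 0$ to guarantee $V(K) \neq 0$ gives
\[
\frac{p}{q+q'} = -\frac{W(K)}{V(K)} = \frac{7a_2(K)^2 - a_2(K) - 10a_4(K)}{2v_3(K)},
\]
as claimed.

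The main obstacle is clearly the middle step: producing the degree-$2$ surgery formula in closed form and pinning down $V$ and $W$ as the specific polynomials $2v_3$ and $-(7a_2^2 - a_2 - 10a_4)$ in the finite type invariants. This requires running the LMO/Aarhus machinery for rational surgery and translating the resulting wheel-and-theta coefficients into $a_2, a_4, v_3$, with careful bookkeeping of normalisations. By contrast, the orientation-reversal sign analysis and the torsion argument that cancels $c_2$ are comparatively routine, and the final algebra is immediate once $V$ and $W$ are known.
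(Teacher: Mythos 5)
First, a point of comparison: the paper does not prove this statement at all. It is imported verbatim as Corollary~1.3 of \cite{Ito}, and the paper's only original content is the remark that Ito's normalization of $v_3$ (value $1/4$ on the right-handed trefoil) differs from the one used here by a factor of $4$, which is why the denominator reads $2v_3(K)$ instead of Ito's $8v_3(K)$. So your sketch must be measured against Ito's proof, whose strategy the paper compresses into one sentence (``studying the degree 2 part of the LMO invariant''). You have guessed that strategy correctly, and your parity observation --- $\tau_n(-M)=(-1)^n\tau_n(M)$, so the degree-$2$ part is blind to the sign $\pm$ --- is indeed the right explanation of why both cases are handled by a single equation. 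But the proposal is not a proof, and the gap is the one you flag yourself: the closed-form degree-$2$ surgery formula together with the identifications $V(K)=2v_3(K)$ and $W(K)=-\bigl(7a_2(K)^2-a_2(K)-10a_4(K)\bigr)$ \emph{is} the content of Ito's theorem. You assert it (``should give'') rather than derive it, so everything of substance is assumed and only the trailing algebra is actually carried out.

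Beyond that, two steps you describe as routine would fail as written. First, the posited shape $\tau_2\bigl(S^3_{p/q}(K)\bigr)=\tfrac{q}{p}V(K)+\tfrac{q^2}{p^2}W(K)+c_2\bigl(L(p,q)\bigr)$ is not available for the naive degree-$2$ part: the LMO invariant is group-like, so $\tau_2=\lambda_2+\tfrac{1}{2}\tau_1^2$ where $\lambda_2$ is primitive, and since $\tau_1\bigl(S^3_{p/q}(K)\bigr)$ contains the Casson--Walker invariant of $L(p,q)$ (a Dedekind sum), the square produces cross terms of the form $\lambda_1\bigl(L(p,q)\bigr)\cdot\tfrac{q}{p}a_2(K)$. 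These are neither polynomial in $q/p$ nor functions of the lens space alone, and they do not cancel between $q$ and $q'$ in your final subtraction; one must work with the primitive part and establish the clean formula there (or additionally feed in the degree-$1$ relation), which is precisely the kind of bookkeeping Ito's paper exists to do. Second, the lens-space cancellation is not ``comparatively routine'': a homeomorphism $S^3_{p/q}(K)\cong\pm S^3_{p/q'}(K)$ identifies $H_1$ only up to multiplication by a unit $u\in(\mathbf{Z}/p)^{\times}$, so the torsion comparison equates an Alexander factor evaluated at $\zeta^u$ with one evaluated at $\zeta$ --- different roots of unity --- and the ``common Alexander-polynomial term'' does not simply divide out. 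Extracting $L(p,q)\cong\pm L(p,q')$ from torsion is a delicate Boyer--Lines-type argument, not a formality, and without it your term $c_2\bigl(L(p,q)\bigr)$ has no reason to equal $c_2\bigl(L(p,q')\bigr)$.
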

\begin{proof}[Remark.]\let\qed\relax
Ito uses a slightly different definition for $v_3$ than is used here; in particular, he normalizes it to take the value 1/4 on the right-handed trefoil. This means that ours differs from his by a factor of 4, and in fact, the statement in \cite{Ito} has $8v_3$ in the denominator of the right-hand-side instead.
\end{proof}

On the other hand, by combining Casson-Gordon and Casson-Walker invariants, Ichihara, Ito, and Saito find:
\begin{thm}[Theorem 1.2 of \cite{IIS}]\label{thm:cass}
Let $K$ be a knot and suppose $S^3_{p/q}(K) \cong - S^3_{p/q'}(K).$ Then
\[
4(q+q')a_2(K)=-\sigma(K,p)
\]
\end{thm}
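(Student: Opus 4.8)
The plan is to extract the identity from two orientation-sensitive invariants of rational homology spheres, each of which has a surgery formula that splits into a knot contribution and a lens space contribution. Imposing the homeomorphism on both invariants and then combining them will cancel the lens space pieces and leave the stated relation. Throughout, write $M_q = S^3_{p/q}(K)$ and $M_{q'} = S^3_{p/q'}(K)$, so the hypothesis is $M_q \cong -M_{q'}$; both are rational homology spheres with $H_1 \cong \mathbf{Z}/p$.

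First I would invoke the Casson--Walker invariant $\lambda$. It reverses sign under orientation reversal, $\lambda(-M) = -\lambda(M)$, and Walker's surgery formula reads
\[
\lambda\big(S^3_{p/q}(K)\big) = \frac{q}{p}\,a_2(K) + \lambda\big(L(p,q)\big),
\]
where I have used $\tfrac12\Delta_K''(1)=a_2(K)$ and where $\lambda(L(p,q))$ is a Dedekind sum. Applying this to $M_q\cong -M_{q'}$ and using the sign behaviour gives
\begin{equation}
\frac{q+q'}{p}\,a_2(K) = -\lambda\big(L(p,q)\big) - \lambda\big(L(p,q')\big). \tag{A}
\end{equation}

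Next I would use the total Casson--Gordon signature $\Sigma(M) = \sum_{\chi\neq 1}\sigma(M,\chi)$, the sum taken over all nontrivial characters $\chi\colon H_1(M)\to \mathbf{Q}/\mathbf{Z}$. Summing over all characters makes $\Sigma$ insensitive to the permutation of characters induced by the (a priori unknown) homeomorphism, and one has $\Sigma(-M) = -\Sigma(M)$. The surgery formula for the Casson--Gordon signature of $S^3_{p/q}(K)$ at the character sending the meridian to $a$ splits as a lens space term plus the Levine--Tristram signature $\sigma_K(e^{2\pi i \bar q a/p})$, where $\bar q q\equiv 1\pmod p$. Since $\bar q$ is a unit, summing over $a=1,\dots,p-1$ permutes the $p$-th roots of unity and yields a knot contribution independent of $q$, namely $\sigma(K,p)=\sum_{a=1}^{p-1}\sigma_K(e^{2\pi i a/p})$. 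Hence $\Sigma(S^3_{p/q}(K)) = \Sigma(L(p,q)) + \sigma(K,p)$, and the homeomorphism gives
\begin{equation}
2\,\sigma(K,p) = -\Sigma\big(L(p,q)\big) - \Sigma\big(L(p,q')\big). \tag{B}
\end{equation}

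The final step is to eliminate the lens space terms. Both $\lambda(L(p,q))$ and $\Sigma(L(p,q))$ are expressible through the same Dedekind sum $s(q,p)$, so I would establish a proportionality $\Sigma(L(p,q)) = c\,p\,\lambda(L(p,q))$ for a universal constant $c$; substituting into (A) and (B) reduces the identity to $2\sigma(K,p) = c\,(q+q')a_2(K)$, and fixing $c=-8$ from the normalizations produces $4(q+q')a_2(K) = -\sigma(K,p)$. I expect the main obstacle to be exactly this last identification: carrying out the Dedekind-sum bookkeeping, and tracking the linking-form constraint on $q,q'$ imposed by the existence of the orientation-reversing homeomorphism (which guarantees that the residual non-Dedekind terms cancel), so that the two lens space contributions are genuinely proportional with the correct constant. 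The knot-theoretic inputs — the two surgery formulae and the sign behaviour under orientation reversal — are standard; the delicate part is the normalization that pins down the factor of $4$.
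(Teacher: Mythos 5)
The paper does not actually prove this statement; it is imported verbatim as Theorem 1.2 of \cite{IIS}, so the only meaningful comparison is with the proof given there. Your outline reconstructs that argument correctly and by essentially the same route: \cite{IIS} likewise combine the Casson--Walker surgery formula (knot term $\tfrac{q}{p}a_2(K)$ plus a Dedekind-sum lens-space term) with the Casson--Gordon signature invariants summed over all nontrivial characters (knot term $\sigma(K,p)$ plus a lens-space term proportional to $p\,s(q,p)$), and they eliminate the lens-space contributions exactly as you propose, the remaining work being precisely the normalization bookkeeping that you flag as the delicate step.
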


Here $\sigma(K,p)$ is the \textit{p-signature} of the knot $K$.  We briefly recall the definition and basic properties of the signature.  If $A$ is any matrix that represents the Seifert form of $K$, then for each $\omega \in S^1 \subseteq \mathbf{C}$ on the complex unit circle, we put $\sigma_{\omega}(K)$ equal to the signature of the Hermitian matrix $(1-\omega)A+(1-\overline{\omega})A^T$.  This is the so-called \textit{signature function} of $K$ on the unit circle.  It is integer-valued, and it only has discontinuities at roots of the Alexander polynomial, which can be seen by rewriting the matrix in the definition as $(\overline{\omega}-1)\left(\omega A-A^T\right)$.  Near $\omega = 1$, this function is zero, and near $\omega = -1$, this function is equal to and invariant called simply the \textit{signature} of the knot.  In our case, for $K=K(k_1,\dots,k_{2g+1})$ it is known that the signature is equal to $2g$.  As the ``jump" in the signature function is equal to two at a simple root of the Alexander polynomial, whenever such a knot has Alexander polynomial with no repeated roots (as will be the case for the relevant knots below), the intervals between the roots will attain all nonnegative even integers up to $2g$ as signatures; see Figure \ref{fig:sig} for an illustration of the case $K=K(1,0,0,0,0)$.  Finally, the \textit{p-signature} of a knot, denoted $\sigma(K,p)$ is equal to the sum of the values signature function at the $p$th roots of unity.  That is,
\[
\sigma(K,p) = \sum_{\omega^p=1} \sigma_{\omega}(K)
\]

\begin{figure}
\centering
\includegraphics[width=.5\textwidth]{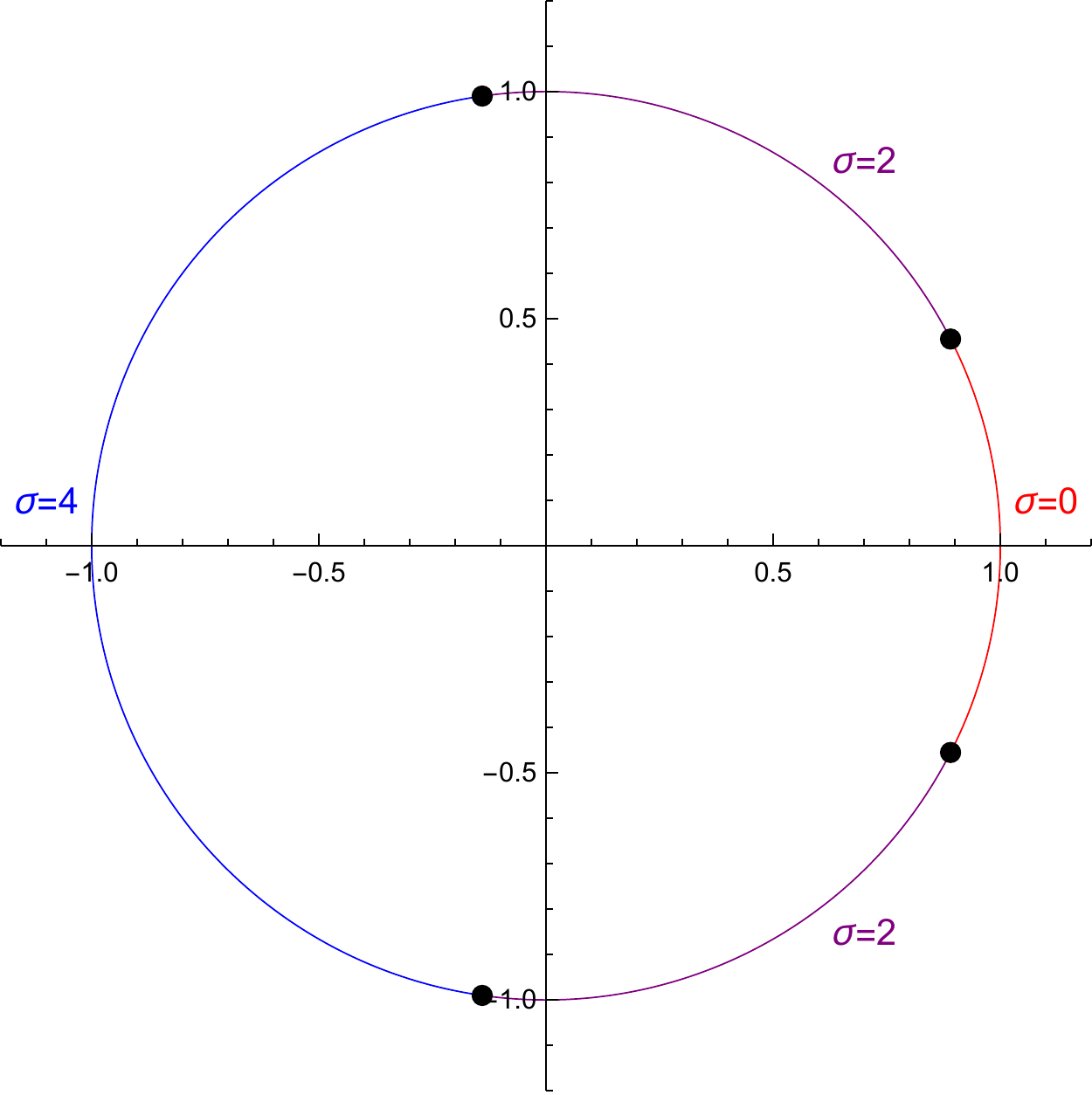}
\caption{The signature function for the knot $K(1,0,0,0,0)$.  The marked points are roots of the Alexander polynomial.}
\label{fig:sig}
\end{figure}

Following \cite{IIS}, we combine Theorems \ref{thm:ft} and \ref{thm:cass} to find that, if a knot $K$ admits a pair of chirally cosmetic surgeries along slopes $p/q$ and $p/q'$, then
\[
-\frac{\sigma(K,p)}{4a_2(K)} = q+q'=\frac{2pv_3(K)}{7a_2(K)^2 - a_2(K)-10a_4(K)}
\]
So we have obtained the following useful criterion:
\begin{cor}\label{cor:strong}
Let $K$ be a knot and suppose $S^3_{p/q}(K) \cong - S^3_{p/q'}(K).$ Then
\[
\frac{\sigma(K,p)}{p} = \frac{-8a_2(K)v_3(K)}{7a_2(K)^2 - a_2(K)-10a_4(K)}
\]
\end{cor}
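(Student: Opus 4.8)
The plan is to obtain the identity by algebraically combining the two existing constraints, Theorem~\ref{thm:ft} and Theorem~\ref{thm:cass}, both of which apply under the chirally cosmetic hypothesis $S^3_{p/q}(K)\cong -S^3_{p/q'}(K)$. The guiding observation is that each theorem isolates the \emph{same} quantity $q+q'$, so equating the two resulting expressions eliminates $q$ and $q'$ and leaves a single relation among $a_2$, $a_4$, $v_3$, the $p$-signature, and $p$.

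Concretely, I would first apply Theorem~\ref{thm:cass}, whose statement matches the hypothesis verbatim, to get $4(q+q')a_2(K)=-\sigma(K,p)$; provided $a_2(K)\neq 0$ this gives $q+q'=-\sigma(K,p)/\bigl(4a_2(K)\bigr)$. Next I would invoke Theorem~\ref{thm:ft} in its orientation-reversing ($-$) branch, which is the one relevant to a chirally cosmetic pair; under the hypothesis $v_3(K)\neq 0$ it yields $p/(q+q')=\bigl(7a_2(K)^2-a_2(K)-10a_4(K)\bigr)/\bigl(2v_3(K)\bigr)$, equivalently $q+q'=2p\,v_3(K)/\bigl(7a_2(K)^2-a_2(K)-10a_4(K)\bigr)$.

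Equating the two expressions for $q+q'$ produces
\[
-\frac{\sigma(K,p)}{4a_2(K)}=\frac{2p\,v_3(K)}{7a_2(K)^2-a_2(K)-10a_4(K)},
\]
and cross-multiplying, then dividing through by $p$, isolates $\sigma(K,p)/p$ in exactly the claimed form, the coefficient $8$ arising as $4\cdot 2$. This is precisely the chain of equalities displayed just before the statement, so the argument is purely algebraic with no analytic content.

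There is no genuine obstacle here; the only things to get right are bookkeeping. I must confirm that ``chirally cosmetic'' selects the $-$ case of Theorem~\ref{thm:ft} and satisfies Theorem~\ref{thm:cass} as stated, and I must track the standing nonvanishing conditions $a_2(K)\neq 0$ and $v_3(K)\neq 0$, together with $7a_2(K)^2-a_2(K)-10a_4(K)\neq 0$, which are what make the successive divisions legitimate and the final quotient well defined. Finally, I would double-check the normalization of $v_3$ flagged in the remark after Theorem~\ref{thm:ft}: since our $v_3$ is four times Ito's, his denominator $8v_3$ becomes the $2v_3$ appearing in Theorem~\ref{thm:ft}, and using this convention consistently is what yields the coefficient $-8$ rather than some other constant.
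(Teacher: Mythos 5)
Your proposal is correct and follows exactly the paper's own route: the corollary is obtained by equating the two expressions for $q+q'$ coming from Theorem~\ref{thm:cass} and Theorem~\ref{thm:ft} and clearing denominators, which is precisely the displayed chain of equalities preceding the statement. Your additional bookkeeping of the nonvanishing conditions ($a_2(K)\neq 0$, $v_3(K)\neq 0$, and $7a_2(K)^2-a_2(K)-10a_4(K)\neq 0$) is if anything slightly more careful than the paper, which leaves these implicit.
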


\section{Proof of Theorem \ref{thm:main}}
We now prove our main result:

\main*
\noindent
We treat the five-stranded (genus 2) case first:
\subsection{The genus 2 case: $K=K(k_1,k_2,k_3,k_4,k_5)$}
In this case, by \eqref{eq:a242} and Lemma \ref{lem:av}

\begin{equation}\label{eq:ft2}
\begin{cases}
a_2(K) = 3 + 2 s_{1,5} + s_{2,5}\\
a_4(K) = 1 + s_{1,5} + s_{2,5} + s_{3,5} + s_{4,5}\\
v_3(K) = -\frac{1}{2}\left(10+10s_{1,5}+2s_{1,5}^2+4s_{2,5}+s_{1,5}s_{2,5}+s_{3,5}\right)
\end{cases}
\end{equation}

We see that $a_2(K),a_4(K)>0$ and $v_3(K)<0.$  Moreover, we also have that $0<\sigma(K,p)\leq 4p$ for all $p>0$.  So if $K$ admits chirally cosmetic surgeries with slopes $p/q$ and $p/q',$ then, by Corollary \ref{cor:strong},
\begin{align*}
0 < \frac{\sigma(K,p)}{p} &= \frac{-8a_2(K)v_3(K)}{7a_2(K)^2 - a_2(K)-10a_4(K)} \leq 4 \\
\leadsto 2a_2(K)|v_3(K)| & \leq 7a_2(K)^2 - a_2(K)-10a_4(K)\numberthis \label{eq:ineq}
\end{align*}
We also have, by \eqref{eq:ft2} the following estimate relating $a_2(K)$ and $a_4(K)$:
\[
a_4(K) = 1 + s_{1,5} + s_{2,5} + s_{3,5} + s_{4,5} \geq 1+ \frac{2}{3}s_{1,5} + \frac{1}{3}s_{2,5} = \frac{1}{3}a_2(K)
\]
Hence, if $K$ admits any chirally cosmetic surgery, then:
\begin{align*}
2a_2(K)|v_3(K)| & \leq 7a_2(K)^2 - a_2(K)-10a_4(K) < 7a_2(K)^2 - a_2(K)-3a_2(K) \\
\leadsto 2|v_3(K)| &< 7a_2(K) - 4
\end{align*}
Thus, we have shown:
\begin{cor}\label{cor:weak}
Let $K=K(k_1,k_2,k_3,k_4,k_5)$.  If
\[
2|v_3(K)| \geq 7a_2(K) - 4
\]
then $K$ does not admit any chirally cosmetic surgeries.
\end{cor}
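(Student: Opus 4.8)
The plan is to prove the contrapositive: assuming that $K$ admits a pair of chirally cosmetic surgeries along slopes $p/q$ and $p/q'$, I will derive the strict inequality $2|v_3(K)| < 7a_2(K)-4$. Everything feeds off Corollary \ref{cor:strong}, which under this hypothesis supplies the exact identity $\frac{\sigma(K,p)}{p} = \frac{-8a_2(K)v_3(K)}{7a_2(K)^2 - a_2(K)-10a_4(K)}$; the whole argument then consists of turning this identity into a polynomial inequality and simplifying.

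First I would pin down the signs of the invariants and bound the left-hand side. Because each $k_i \geq 0$, every elementary symmetric polynomial $s_{n,5}$ is nonnegative, so reading off \eqref{eq:ft2} gives $a_2(K)>0$, $a_4(K)>0$, and $v_3(K)<0$ (the last because every monomial inside the parenthesis in the formula for $v_3$ is nonnegative). I would then invoke the signature bound $0 < \sigma(K,p) \leq 4p$, which follows from the earlier discussion that the signature function of a genus-$2$ knot of this type takes values in $\{0,2,4\}$ and that $\sigma(K,p)$ sums these over the $p$-th roots of unity. Feeding these into the identity gives $0 < \frac{-8a_2 v_3}{7a_2^2 - a_2 - 10a_4} \leq 4$; positivity of the fraction forces the denominator to be positive, and clearing it in the upper bound yields inequality \eqref{eq:ineq}, namely $2a_2(K)|v_3(K)| \leq 7a_2(K)^2 - a_2(K) - 10a_4(K)$.

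Next I would eliminate $a_4$ using a linear lower bound in terms of $a_2$. Dropping the nonnegative terms $s_{3,5}$ and $s_{4,5}$ from $a_4$ and comparing the remaining coefficients with those of $a_2 = 3 + 2s_{1,5} + s_{2,5}$ gives $a_4 \geq 1 + \frac{2}{3}s_{1,5} + \frac{1}{3}s_{2,5} = \frac{1}{3}a_2$, hence $-10a_4 < -3a_2$. Substituting this into \eqref{eq:ineq} yields $2a_2|v_3| < 7a_2^2 - 4a_2$, and dividing through by $a_2 > 0$ produces exactly $2|v_3| < 7a_2 - 4$. The contrapositive of this implication is the statement of the corollary.

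Since Corollary \ref{cor:strong} is available as input, the remaining argument is elementary and I anticipate no serious obstacle. The only steps demanding care are the verification that $0 < \sigma(K,p) \leq 4p$ --- which rests entirely on the preceding signature analysis --- and the sign bookkeeping guaranteeing $v_3(K) < 0$ strictly and the denominator of the key fraction positive, since the whole chain of inequalities depends on these being strict.
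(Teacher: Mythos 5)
Your proposal is correct and follows essentially the same route as the paper: combine Corollary \ref{cor:strong} with the sign/signature bounds $a_2,a_4>0$, $v_3<0$, $0<\sigma(K,p)\leq 4p$ to obtain \eqref{eq:ineq}, then eliminate $a_4$ via $a_4(K)\geq \tfrac{1}{3}a_2(K)$ and divide by $a_2(K)>0$ to reach the strict inequality $2|v_3(K)|<7a_2(K)-4$, whose contrapositive is the corollary. Your explicit remark that positivity of the fraction forces the denominator $7a_2^2-a_2-10a_4$ to be positive (so the upper bound can be cleared without reversing the inequality) is a point the paper leaves implicit.
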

We now apply this corollary to exclude all but four knots in this family from admitting chirally cosmetic surgeries.  We proceed in three cases:\\
\textbf{Case I: At least three $k_i$ are nonzero:}
By \eqref{eq:ft2} we see that
\begin{align*}
2|v_3(K)| - 7a_2(K) + 4 &= 10 +	10 s_1 + 2 s_1^2 +4 s_2 +s_1 s_2 +s_3 - 21 -14 s_1 -7 s_2 +4\\
&=-7-4s_1+2s_1^2-3s_2+s_1s_2+s_3\\
&=-7+(2s_1-4)s_1+(s_1-3)s_2 + s_3
\end{align*}

In this case, we have that $s_1 \geq 3$ and $s_3 \geq 1$ so that
\[
2|v_3(K)| - 7a_2(K) + 4 \geq -7+(6-4)3 + 1 =0
\]
Hence, by Corollary \ref{cor:weak}, these knots admit no chirally cosmetic surgeries.\\
\textbf{Case II: Exactly two $k_i$ are nonzero:}
Notice that, without loss of generality, we may assume $(k_1,k_2,k_3,k_4,k_5) = (a,b,0,0,0)$ for $a \geq b >0$.  This is because, by the symmetries of pretzel knots, one may cyclically permute the strands until the longest strand is in the first position; then, by applying ``flyping" moves, one can bring the other ``long" strand to the second position.  We compute that $s_1 = a+b,$ $s_2=ab,$ and $s_3=0.$

Suppose $a+b\geq 4.$ Then
\begin{align*}
|v_3(K)| &= 5+5(a+b)+(a+b)^2 + 2ab+ \frac{1}{2}(a+b)(ab) \\
& \geq 5 + 9(a+b) + \frac{ab}{2}(4+a+b)
\end{align*}
Thus
\begin{align*}
2|v_3(K)| - 7a_2(K) &\geq 10 + 18(a+b) + ab(4+a+b) - 21 -14 (a+b) -7 ab +4\\
&=-7+4(a+b)+ab(a+b-3)\\
&geq -7+16 + ab \geq 0
\end{align*}
Once again, Corollary \ref{cor:weak} guarantees that $K(a,b,0,0,0)$ admits no chirally cosmetic surgeries as long as $a+b \geq 4.$  This leaves: $K(1,1,0,0,0)$ and $K(2,1,0,0,0)$ as still unchecked; we deal with these two knots below.\\
\textbf{Case III: Exactly one $k_i$ nonzero:}
As before, we may assume (by appropriate permutation) that the longest strand is in the first position; i.e., we are considering the case $K(N,0,0,0,0)$ with $N > 0.$  Here $s_1=N$ and $s_2=s_3=s_4=0.$  Thus, by \eqref{eq:ft2}:
\begin{align*}
7a_2(K)^2 - a_2(K) - 10a_4(K) &= 28N^2+72N+50 \\
2a_2(K)|v_3(K)| &= 4N^3+26N^2+50N+30
\end{align*}
We see that \eqref{eq:ineq} is satisfied only for $N \leq 2,$ and so, by Corollary \ref{cor:strong}, $K(N,0,0,0,0)$ admits no chirally cosmetic sugeries when $N \geq 3.$  This leaves $K(1,0,0,0,0)$ and $K(2,0,0,0,0).$  As still unchecked.  We turn to these and the other two leftover knots now.\\
\textbf{The remaining knots:}
Let us turn our attention to the four knots that still need to be checked.

First, we consider $K=K(2,1,0,0,0).$  We compute, using \eqref{eq:ft2} that
\begin{align*}
2a_2(K)|v_3(K)| &= 2 (11)(36) = 792\\
7a_2(K)^2 - a_2(K) - 10a_4(K) &= 7(121) -11-60 = 776
\end{align*}
The inequality \eqref{eq:ineq} is not satisfied, and so, by Corollary \ref{cor:strong}, this knot does not admit any chirally cosmetic surgeries.

For the next three knots, we will investigate their signature functions in order to obtain bounds for their respective p-signatures.

Let $K=K(1,0,0,0,0).$  We compute that $a_2(K)=5$, $a_4(K)=2$, and $v_3(K)=-11$
\[
\frac{-8a_2(K)v_3(K)}{7a_2(K)^2 - a_2(K)-10a_4(K)} = \frac{44}{15}.
\]
Hence, by Corollary \ref{cor:strong}, if $K$ admits chirally cosmetic surgeries with slopes $p/q$ and $p/q',$ then $p=15n$ for some positive integer $n$.  From the Conway polynomial $1+5z^2+2z^4,$ we find that the roots of the Alexander polynomial are $e^{\pm i\theta_1}$ and $e^{\pm i\theta_2},$ where $\theta_1 \approx 1.712>\frac{8\pi}{15}$ and $\theta_2 \approx 0.473>\frac{2\pi}{15}.$  Hence, $\sigma_\omega(K)=0$ when $-\frac{2\pi}{15}\leq \arg(\omega)\leq \frac{2\pi}{15}$ and $\sigma_\omega(K)\leq 2$ when $-\frac{8\pi}{15}\leq \arg(\omega)\leq \frac{8\pi}{15}.$  See Figure \ref{fig:sig} for an illustration.

So
\begin{align*}
\frac{\sigma(K,15n)}{15n} &\leq \frac{2(6n)+4(7n-1)}{15n}\\
&=\frac{40n-4}{15n}\\
&<\frac{40}{15} < \frac{44}{15}
\end{align*}
Hence, by Corollary \ref{cor:strong}, $K$ admits no chirally cosmetic surgeries.

Now let $K=K(2,0,0,0,0).$  We compute $a_2(K)=7$, $a_4(K)=3$, and $v_3(K)=-19$
\[
\frac{-8a_2(K)v_3(K)}{7a_2(K)^2 - a_2(K)-10a_4(K)} = \frac{532}{153}.
\]
As before, by Corollary \ref{cor:strong}, if $K$ admits chirally cosmetic surgeries with slopes $p/q$ and $p/q',$ then $p=153n$ for some positive integer $n$.  By \eqref{eq:a242}, this knot has Conway polynomial $1+7z^2+3z^4$.  Thus we find that the roots of the Alexander polynomial, which we denote $e^{\pm i\theta_1}$ and $e^{\pm i\theta_2}$, satisfy $\theta_1 \approx 1.661>\frac{80\pi}{153}$ and $\theta_2 \approx 0.394>\frac{18\pi}{153}$.  Hence, $\sigma_\omega(K)=0$ when $-\frac{18\pi}{153}\leq \arg(\omega)\leq \frac{18\pi}{153}$ and $\sigma_\omega(K)\leq 2$ when $-\frac{80\pi}{153}\leq \arg(\omega)\leq \frac{80\pi}{153}$.
So
\begin{align*}
\frac{\sigma(K,153n)}{153n} &\leq \frac{2(62n)+4(73n-1)}{15n}\\
&=\frac{416n-4}{153n}\\
&<\frac{416}{153} < \frac{532}{153}
\end{align*}
Hence, by Corollary \ref{cor:strong}, $K$ admits no chirally cosmetic surgeries.

Lastly, let $K=K(1,1,0,0,0).$  We compute $a_2(K)=8$, $a_4(K)=4$, and $v_3(K)=-22$
\[
\frac{-8a_2(K)v_3(K)}{7a_2(K)^2 - a_2(K)-10a_4(K)} = \frac{88}{25}.
\]
As before, by Corollary \ref{cor:strong}, if $K$ admits chirally cosmetic surgeries with slopes $p/q$ and $p/q',$ then $p=25n$ for some positive integer $n$.  From the Conway polynomial $1+8z^2+4z^4,$ we find that the roots of the Alexander polynomial are $e^{\pm i\theta_1}$ and $e^{\pm i\theta_2},$ where $\theta_1 \approx 1.504>\frac{10\pi}{25}$ and $\theta_2 \approx 0.368>\frac{2\pi}{25}.$  Hence, $\sigma_\omega(K)=0$ when $-\frac{2\pi}{25}\leq \arg(\omega)\leq \frac{2\pi}{25}$ and $\sigma_\omega(K)\leq 2$ when $-\frac{10\pi}{25}\leq \arg(\omega)\leq \frac{10\pi}{25}.$
So
\begin{align*}
\frac{\sigma(K,25n)}{25n} &\leq \frac{2(8n)+4(15n-1)}{15n}\\
&=\frac{76n-4}{25n}\\
&<\frac{76}{25} < \frac{88}{25}
\end{align*}
Once again, by Corollary \ref{cor:strong}, $K$ admits no chirally cosmetic surgeries.  This concludes the case of genus 2.

\subsection{The genus 3 case: $K=K(k_1,k_2,k_3,k_4,k_5,k_6,k_7)$}
In this case, by \eqref{eq:a243}, and Lemma \ref{lem:av}

\begin{equation}\label{eq:ft3}
\begin{cases}
a_2(K) = 6 + 3 s_{1,7} + s_{2,7}\\
a_4(K) = 5 + 4s_{1,7} + 3s_{2,7} + 2s_{3,7} + s_{4,7}\\
a_6(K) = 1 + s_{1,7} + s_{2,7} + s_{3,7} + s_{4,7} + s_{5,7} + s_{6,7}\\
v_3(K) = -\frac{1}{2}\left(28+21s_{1,5}+3s_{1,5}^2+6s_{2,5}+s_{1,5}s_{2,5}+s_{3,5}\right)
\end{cases}
\end{equation}

Once again $a_2(K),a_4(K)>0$ and $v_3(K)<0.$  Moreover, we also have that $0<\sigma(K,p)\leq 6p$ for all $p>0$.  So if $K$ admits chirally cosmetic surgeries with slopes $p/q$ and $p/q',$ then, by Corollary \ref{cor:strong},
\begin{align*}
0 < \frac{\sigma(K,p)}{p} &= \frac{-8a_2(K)v_3(K)}{7a_2(K)^2 - a_2(K)-10a_4(K)} \leq 6 \\
\leadsto \frac{4}{3}a_2(K)|v_3(K)| & \leq 7a_2(K)^2 - a_2(K)-10a_4(K)\numberthis \label{eq:ineq3}
\end{align*}
We also have, by \eqref{eq:ft3} the following estimate relating $a_2(K)$ and $a_4(K)$:
\[
\frac{6}{5}a_4(K) = 6 + \frac{24}{5}s_{1,7} + \frac{18}{5}s_{2,7} + \frac{12}{5}s_{3,7} + \frac{6}{5}s_{4,7} \geq 6+ 3s_{1,7} + s_{2,7} = a_2(K)
\]
Hence, if $K$ admits any chirally cosmetic surgery, then:
\begin{align*}
\frac{4}{3}a_2(K)|v_3(K)| & \leq 7a_2(K)^2 - a_2(K)-10a_4(K) \leq 7a_2(K)^2 - a_2(K)-\frac{25}{3}a_2(K) \\
\leadsto 4|v_3(K)| & \leq  21a_2(K) - 28
\end{align*}
Thus, we have shown:
\begin{cor}\label{cor:weak3}
Let $K=K(k_1,k_2,k_3,k_4,k_5,k_6,k_7)$.  If
\[
4|v_3(K)| > 21a_2(K) - 28
\]
then $K$ does not admit any chirally cosmetic surgeries.
\end{cor}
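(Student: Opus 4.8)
The plan is to prove the contrapositive: assuming that $K$ admits a pair of chirally cosmetic surgeries along slopes $p/q$ and $p/q'$, I would derive the inequality $4|v_3(K)| \le 21a_2(K) - 28$, contradicting the hypothesis. The natural starting point is Corollary \ref{cor:strong}, which under the chirally cosmetic hypothesis gives
\[
\frac{\sigma(K,p)}{p} = \frac{-8a_2(K)v_3(K)}{7a_2(K)^2 - a_2(K) - 10a_4(K)}.
\]
From the explicit formulas \eqref{eq:ft3} and the standing assumption $k_i \ge 0$, every elementary symmetric polynomial $s_{n,7}$ is nonnegative, so $a_2(K), a_4(K) > 0$ while $v_3(K) < 0$; moreover, since the signature of a genus-$3$ knot of this form is $2g = 6$, one has $0 < \sigma(K,p) \le 6p$ for all $p > 0$.

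First I would feed these sign and magnitude facts into the displayed identity. The numerator equals $8a_2(K)|v_3(K)| > 0$, and since the left-hand side is positive, the denominator $7a_2(K)^2 - a_2(K) - 10a_4(K)$ must be positive as well; this positivity is exactly what lets me clear denominators without reversing the inequality. The upper bound $\sigma(K,p)/p \le 6$ then rearranges to
\[
\tfrac{4}{3}a_2(K)|v_3(K)| \le 7a_2(K)^2 - a_2(K) - 10a_4(K).
\]

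The one genuinely substantive step is a sharp comparison between $a_4(K)$ and $a_2(K)$. Reading off \eqref{eq:ft3} coefficient by coefficient, the constant terms of $\tfrac{6}{5}a_4(K)$ and $a_2(K)$ both equal $6$, while the coefficients $\tfrac{24}{5}, \tfrac{18}{5}$ of $s_{1,7}, s_{2,7}$ in $\tfrac{6}{5}a_4(K)$ dominate the coefficients $3, 1$ occurring in $a_2(K)$, and the leftover $s_{3,7}, s_{4,7}$ terms are nonnegative; hence $\tfrac{6}{5}a_4(K) \ge a_2(K)$, i.e. $a_4(K) \ge \tfrac{5}{6}a_2(K)$. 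Substituting $-10a_4(K) \le -\tfrac{25}{3}a_2(K)$ yields $\tfrac{4}{3}a_2(K)|v_3(K)| \le 7a_2(K)^2 - \tfrac{28}{3}a_2(K)$, and dividing through by $a_2(K) > 0$ and multiplying by $3$ produces the claimed bound $4|v_3(K)| \le 21a_2(K) - 28$, the desired contradiction. I expect the main subtlety to lie precisely in locating the sharp constant $\tfrac{5}{6}$ in this comparison: it must be chosen so that the constant terms match exactly while the symmetric-polynomial coefficients remain dominated, since any weaker constant would fail to absorb enough of the $a_4$ contribution to reach the clean threshold $21a_2(K) - 28$.
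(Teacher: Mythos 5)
Your proposal is correct and follows essentially the same route as the paper: both apply Corollary \ref{cor:strong} with the bound $0 < \sigma(K,p)/p \leq 6$, then use the coefficientwise comparison $\tfrac{6}{5}a_4(K) \geq a_2(K)$ from \eqref{eq:ft3} to eliminate $a_4(K)$ and arrive at $4|v_3(K)| \leq 21a_2(K) - 28$. Your explicit observation that the denominator $7a_2(K)^2 - a_2(K) - 10a_4(K)$ must be positive before clearing it is a point the paper leaves implicit, but it is the same argument.
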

Now we compute:
\begin{align*}
4|v_3(K)| - 21a_2(K)+ 28 &=-42-21s_{1,7}-9s_{2,7}+6s_{1,7}^2+2s_{1,7}s_{2,7}+2s_{3,7}\\
&=-42 + 3s_{1,7}(2s_{1,7}-7) + s_{2,7}(2s_{1,7}-9)+2s_{3,7}
\end{align*}
If $s_{1,7} \geq 5$, then
\[
4|v_3(K)| - 21a_2(K)+ 28 \geq -42+3(5)(3) > 0
\]
Hence, by Corollary \ref{cor:weak3}, these knots do not admit any chirally cosmetic surgeries.  It remains to check the knots in this family with $s_{1,7}<5$.  These are (once again using the symmetries of the pretzel knots): $K(1,1,1,1,0,0,0)$, $K(2,1,1,0,0,0,0)$, $K(1,1,1,0,0,0,0)$, $K(3,1,0,0,0,0,0)$, $K(2,2,0,0,0,0,0)$, $K(2,1,0,0,0,0,0)$, $K(1,1,0,0,0,0,0)$, $K(4,0,0,0,0,0,0)$, $K(3,0,0,0,0,0,0)$, $K(2,0,0,0,0,0,0)$, and $K(1,0,0,0,0,0,0)$.

First, we note that for $K=K(1,1,1,1,0,0,0)$, we have, by \eqref{eq:ft3}, $a_2(K)=24$, $a_4(K)=48$, and $v_3(K)=-112$.  Thus, $\frac{4}{3}a_2(K)|v_3(K)| = 3584 > 3528 = 7a_2(K)^2 - a_2(K)-10a_4(K)$ so that inequality \ref{eq:ineq3} is not satisfied, implying that this knot admits no chirally cosmetic surgeries.

For the rest of the knots, as in the genus 2 case, we compute $F \coloneqq \frac{-8a_2(K)v_3(K)}{7a_2(K)^2 - a_2(K)-10a_4(K)}$.  By Corollary \ref{cor:strong}, if $K$ admits chirally cosmetic surgeries with slopes $p/q$ and $p/q'$, then $\frac{\sigma(K,p)}{p}=F$ so that $p$ must be a multiple of the denominator of $F$.   From the roots of their Alexander polynomials, we obtain bounds on their signature functions. For all but one of the remaining knots, we find that $\frac{\sigma(K,p)}{p}<F$ so that those knots admit no chirally cosmetic surgeries.  The results are summarized in the following table (we denote the roots of the Alexander polynomial by $e^{\pm i\theta_1}$,  $e^{\pm i\theta_2}$, and  $e^{\pm i\theta_3}$):\\
\begin{center}
\begin{tabular}{|c|c|c|c|c|c|}
\hline
$K$ & $F$ & $\theta_1 >$ & $\theta_2 >$ & $\theta_3 >$ & $\frac{\sigma(K,p)}{p}\leq$\\
\hline
$K(2,1,1,0,0,0,0)$ & $\frac{1219}{205}$ & $\frac{14\pi}{205}$ & $\frac{54\pi}{205}$ & $\frac{122\pi}{205}$ & $ \frac{6(83n-1)+4(68n)+2(40n)}{205n}<\frac{850}{205}$\\
\hline
$K(1,1,1,0,0,0,0)$ & $\frac{5256}{985}$ & $\frac{76\pi}{985}$ & $\frac{282\pi}{985}$ & $\frac{600\pi}{985}$ & $ \frac{6(385n-1)+4(318n)+2(206n)}{985n}<\frac{3994}{985}$\\
\hline
$K(3,1,0,0,0,0,0)$ & $\frac{2660}{461}$ & $\frac{32\pi}{461}$ & $\frac{140\pi}{461}$ & $\frac{294\pi}{461}$ & $ \frac{6(167n-1)+4(154n)+2(108n)}{461n}<\frac{1834}{461}$\\
\hline
$K(2,2,0,0,0,0,0)$ & $\frac{400}{69}$ & $\frac{4\pi}{69}$ & $\frac{20\pi}{69}$ & $\frac{42\pi}{69}$ & $ \frac{6(27n-1)+4(22n)+2(16n)}{69n}<\frac{282}{69}$\\
\hline
$K(2,1,0,0,0,0,0)$ & $\frac{578}{111}$ & $\frac{8\pi}{111}$ & $\frac{34\pi}{111}$ & $\frac{70\pi}{111}$ & $ \frac{6(41n-1)+4(36n)+2(26n)}{111n}<\frac{442}{111}$\\
\hline
$K(1,1,0,0,0,0,0)$ & $\frac{468}{101}$ & $\frac{8\pi}{101}$ & $\frac{32\pi}{101}$ & $\frac{64\pi}{101}$ & $ \frac{6(37n-1)+4(32n)+2(24n)}{101n}<\frac{398}{101}$\\
\hline
$K(4,0,0,0,0,0,0)$ & $\frac{96}{17}$ & $0$ & $\frac{4\pi}{17}$ & $\frac{10\pi}{17}$ & $ \frac{6(7n-1)+4(6n)+2(4n)}{17n}<\frac{74}{17}$\\
\hline
$K(3,0,0,0,0,0,0)$ & $\frac{708}{139}$ & $\frac{10\pi}{139}$ & $\frac{48\pi}{139}$ & $\frac{92\pi}{139}$ & $ \frac{6(47n-1)+4(44n)+2(38n)}{139n}<\frac{534}{139}$\\
\hline
$K(2,0,0,0,0,0,0)$ & $\frac{1968}{433}$ & $\frac{42\pi}{433}$ & $\frac{158\pi}{433}$ & $\frac{292\pi}{433}$ & $ \frac{6(141n-1)+4(134n)+2(116n)}{433n}<\frac{1614}{433}$\\
\hline
\end{tabular}
\end{center}
\hfill \\

Finally, we turn our attention to the knot $K(1,0,0,0,0,0,0)$.  By \eqref{eq:ft3}, we compute that: $\frac{-8a_2(K)v_3(K)}{7a_2(K)^2 - a_2(K)-10a_4(K)} = 4$.  The roots of the Alexander polynomial for this knot are $e^{\pm 2\pi i\alpha}$,  $e^{\pm 2\pi i\beta}$, and  $e^{\pm 2\pi i\gamma}$, where $.056 < \alpha < .057$, $.19 < \beta < .191$, and $.342 < \gamma < .343$.  Therefore, we obtain the estimate:
\begin{align*}
\frac{\sigma(K,p)}{p} &\leq \frac{6\lceil .316 p\rceil + 4(2\lceil.153p\rceil) + 2(2\lceil.135 p\rceil)}{p}\\
&\leq \frac{6(.316 p +1) + 4(2(.153p+1)) + 2(2(.135 p +1))}{p}\\
&= 3.66+\frac{18}{p}
\end{align*}
Hence, if $p\geq 53$, then $\frac{\sigma(K,p)}{p}<4$, which by Corollary \ref{cor:strong} excludes the possibility of chirally cosmetic surgeries on this knot with slopes $p/q$ and $p/q'$ whenever $p \geq 53$.  Using Mathematica \cite{WM}, we  explicitly compute $\frac{\sigma(K,p)}{p}$ for $1 \leq p \leq 52$. The results are in the table below:
\begin{center}
\begin{tabular}{|c|c||c|c||c|c||c|c|}
\hline
$p$ & $\frac{\sigma(K,p)}{p}$ & $p$ & $\frac{\sigma(K,p)}{p}$ & $p$ & $\frac{\sigma(K,p)}{p}$ & $p$ & $\frac{\sigma(K,p)}{p}$\\
\hline
 1 & 0 & 14 & $\frac{27}{7}$ & 27 & $\frac{32}{9}$ &
   40 & $\frac{73}{20}$ \\ \hline
 2 & 3 & 15 & $\frac{56}{15}$ & 28 & $\frac{51}{14}$ &
   41 & $\frac{148}{41}$ \\ \hline
 3 & $\frac{8}{3}$ & 16 & $\frac{29}{8}$ &
   29 & $\frac{108}{29}$ & 42 & $\frac{25}{7}$ \\ \hline
 4 & $\frac{7}{2}$ & 17 & $\frac{64}{17}$ &
   30 & $\frac{11}{3}$ & 43 & $\frac{156}{43}$ \\ \hline
 5 & 4 & 18 & $\frac{31}{9}$ & 31 & $\frac{116}{31}$ &
   44 & $\frac{79}{22}$ \\ \hline
 6 & 3 & 19 & $\frac{68}{19}$ & 32 & $\frac{59}{16}$ &
   45 & $\frac{164}{45}$ \\ \hline
 7 & $\frac{24}{7}$ & 20 & $\frac{37}{10}$ &
   33 & $\frac{40}{11}$ & 46 & $\frac{85}{23}$ \\ \hline
 8 & $\frac{15}{4}$ & 21 & $\frac{24}{7}$ &
   34 & $\frac{63}{17}$ & 47 & $\frac{172}{47}$ \\ \hline
 9 & $\frac{32}{9}$ & 22 & $\frac{39}{11}$ &
   35 & $\frac{132}{35}$ & 48 & $\frac{29}{8}$ \\ \hline
 10 & $\frac{19}{5}$ & 23 & $\frac{84}{23}$ &
   36 & $\frac{65}{18}$ & 49 & $\frac{180}{49}$ \\ \hline
 11 & $\frac{40}{11}$ & 24 & $\frac{43}{12}$ &
   37 & $\frac{132}{37}$ & 50 & $\frac{91}{25}$ \\ \hline
 12 & $\frac{7}{2}$ & 25 & $\frac{92}{25}$ &
   38 & $\frac{67}{19}$ & 51 & $\frac{188}{51}$ \\ \hline
 13 & $\frac{48}{13}$ & 26 & $\frac{49}{13}$ &
   39 & $\frac{140}{39}$ & 52 & $\frac{97}{26}$ \\ \hline
\end{tabular}
\end{center}

\hfill \\
We see that $\frac{\sigma(K,p)}{p}=4$ only when $p=5$.  Hence, by Corollary \ref{cor:strong}, if $K$ admits chirally cosmetic surgeries, they must have surgery slopes $5/q$ and $5/q'$ for some integers $q$ and $q'$.  However, by Theorem \ref{thm:cass}, we must have:
\[
q+q' = \frac{-\sigma(K,5)}{4a_2(K)} = \frac{-20}{4\cdot 9} = -\frac{5}{9}
\]
which is impossible, as $q$ and $q'$ are supposed to be integers.
\qed

\section{Proof of Theorem \ref{thm:gen}}

Now we turn our attention to the general case.  We show that for a fixed genus, at most finitely many alternating odd pretzel knots can possibly admit chirally cosmetic surgeries.  We shall make use of the following:
\begin{lem}[Corollary 6.2 of \cite{IIS}]\label{lem:weak}
Let $K$ be a nontrivial negative knot (i.e., a knot with all negative crossings).  If $4|v_3(K)| \geq 7g a_2(K)$ then $K$ admits no chirally cosmetic surgeries (here $g$ is the Seifert genus of $K$.
\end{lem}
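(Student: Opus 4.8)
The plan is to argue by contradiction, feeding the sign data of a negative knot into Corollary \ref{cor:strong}. Suppose $K$ is a nontrivial negative knot of Seifert genus $g$ with $4|v_3(K)| \geq 7g\,a_2(K)$, and suppose toward a contradiction that it admits chirally cosmetic surgeries along slopes $p/q$ and $p/q'$ (where, after replacing the pair by its mirror if necessary, we may take $p>0$). First I would assemble the relevant sign constraints for negative knots: (i) $a_2(K)>0$ and $a_4(K)\geq 0$ — the even Conway coefficients are mirror-invariant and nonnegative for positive and negative knots, with $a_2>0$ in the nontrivial case; (ii) $v_3(K)<0$, so in particular $v_3(K)\neq 0$ and Theorem \ref{thm:ft} applies; and (iii) the Levine--Tristram signature satisfies $0\leq \sigma_\omega(K)\leq 2g$ for every $\omega\in S^1$ together with $\sigma_1(K)=0$.

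The upper bound in (iii) is immediate because $\sigma_\omega(K)$ is the signature of a $2g\times 2g$ Hermitian matrix coming from a minimal-genus Seifert surface, and the vanishing at $\omega=1$ is recorded in the excerpt. Summing over the $p$-th roots of unity and discarding the zero term at $\omega=1$ gives $\sigma(K,p)\leq 2g(p-1)$, hence
\[
0\;\leq\;\frac{\sigma(K,p)}{p}\;<\;2g.
\]
The nonnegativity of the signature function for negative knots is what guarantees $\sigma(K,p)\geq 0$, and this is the step I would lean on to control the sign of the denominator below.

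Next I would substitute into Corollary \ref{cor:strong}. Writing $D \coloneqq 7a_2(K)^2 - a_2(K) - 10a_4(K)$, the numerator $-8a_2(K)v_3(K)=8a_2(K)|v_3(K)|$ is strictly positive by (i) and (ii); since $\sigma(K,p)/p = 8a_2(K)|v_3(K)|/D \geq 0$, we must have $D>0$ and $\sigma(K,p)>0$ (the case $D<0$ would force $\sigma(K,p)/p<0$, and $D=0$ is excluded by Theorem \ref{thm:ft}). Combining $8a_2(K)|v_3(K)| = \tfrac{\sigma(K,p)}{p}\,D < 2gD$ with the hypothesis, multiplied through by $2a_2(K)>0$ to give $8a_2(K)|v_3(K)|\geq 14g\,a_2(K)^2$, yields
\[
14g\,a_2(K)^2 \;\leq\; 8a_2(K)\,|v_3(K)| \;<\; 2g\bigl(7a_2(K)^2 - a_2(K) - 10a_4(K)\bigr) = 14g\,a_2(K)^2 - 2g\,a_2(K) - 20g\,a_4(K).
\]
This forces $2g\,a_2(K) + 20g\,a_4(K) < 0$, contradicting $g>0$, $a_2(K)>0$, and $a_4(K)\geq 0$. (This is precisely the cruder analogue of Corollary \ref{cor:weak}, which in genus $2$ sharpened the estimate using $a_4\geq \tfrac13 a_2$; here only $a_4\geq 0$ is needed.)

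The arithmetic of the final display is routine; the substantive content is entirely in the input facts about negative knots. The hard part will be justifying $v_3(K)<0$ and $a_4(K)\geq 0$ for \emph{all} negative knots, together with the nonnegativity $\sigma_\omega(K)\geq 0$ of the full signature function. I would either cite these as known sign constraints for negative (equivalently, mirrored positive) knots, or establish $a_2$, $a_4$, and $v_3$ sign information by induction on the number of crossings via the skein relations \eqref{eq:skein2} and \eqref{eq:skein3}, exactly as in the explicit computation of Lemma \ref{lem:av}, while the signature positivity follows from the structure of the Seifert form of a negative knot.
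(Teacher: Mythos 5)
Your proposal is correct and takes essentially the same route as the paper: both arguments feed the sign facts for negative knots ($a_2(K)>0$, $a_4(K)\geq 0$, $v_3(K)<0$, $\sigma(K,p)>0$, which the paper cites to \cite{Crom}, \cite{Stoi}, and \cite{PT}) together with the bound $\sigma(K,p)/p\leq 2g$ into Corollary \ref{cor:strong}, then discard the $-a_2(K)-10a_4(K)$ terms to conclude $4|v_3(K)|<7g\,a_2(K)$. Your contradiction phrasing and the extra bookkeeping (strictness via $\sigma_1(K)=0$, positivity of the denominator $D$, multiplying the hypothesis by $2a_2(K)$) are only cosmetic rearrangements of the paper's two-line estimate.
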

\begin{proof}This follows from the fact that for negative knots, $a_2(K),a_4(K) \geq 0$ \cite{Crom}, $v_3(K)<0$ \cite{Stoi}, and $\sigma(K,p)>0$ \cite{PT}.  (For the family of knots we are considering, two of these inequalities can be seen directly from Lemma \ref{lem:av}).  Now Corollary \ref{cor:strong} implies that if $p/q$- and $p/q'$-surgeries are chirally cosmetic,
\begin{align*}
0 < \frac{\sigma(K,p)}{p} &= \frac{-8a_2(K)v_3(K)}{7a_2(K)^2 - a_2(K)-10a_4(K)} \leq 2g \\
\leadsto 4a_2(K)|v_3(K)| & \leq g(7a_2(K)^2 - a_2(K)-10a_4(K) < 7a_2(K)^2)\\
\leadsto 4|v_3(K)| &< 7ga _2(K)
\end{align*}
\end{proof}
We now prove:
\gen*
\begin{proof}
As these knots are negative, by Lemma \ref{lem:weak}, $K$ admits no chirally cosmetic surgeries if $4|v_3(K)|\geq 7g a_2(K)$ .  By Lemma \ref{lem:av}, we compute:
\begin{multline}\nonumber
4|v_3(K)|- 7g a_2(K) = \frac{2g(g+1)(2g+1)}{3}-\frac{7g(g+1)}{2} + 2g(2g+1)s_{1,2g+1} - 7g^2s_{1,2g+1} \\+ 4gs_{2,2g+1}-7gs_{2,2g+1} + 2gs_{1,2g+1}^2+2s_{1,2g+1}s_{2,2g+1} + 2s_{3,2g+1}\\
= \frac{-13g^3-9g^2+4g}{6} + g(2-3g)s_{1,2g+1}+ 2gs_{1,2g+1}^2 -3gs_{2,2g+1}+2s_{1,2g+1}s_{2,2g+1} + 2s_{3,2g+1}\\
=\frac{-13g^3-9g^2+4g}{6} + gs_{1,2g+1}(2s_{1,2g+1}+2-3g) + s_{2,2g+1}(2s_{1,2g+1}-3g) + 2s_{3,2g+1}
\end{multline}
If $s_{1,2g+1} \geq \alpha g$, then $s_{1,2g+1}(2s_{1,2g+1}-3g)\geq \frac{13g^2}{6}$, $2s_{1,2g+1}-3g \geq 0$, and $2gs_{1,2g+1} \geq 4g^2$. Hence:
\[
4|v_3(K)|- 7g a_2(K) \geq \frac{-13g^3-9g^2+4g}{6} + \frac{13g^3}{6}+4g^2 \geq 0
\]
as desired.
\end{proof}
\printbibliography

\end{document}